\newtheorem{theorem}{Theorem}[section]
\newtheorem{proposition}[theorem]{Proposition}
\theoremstyle{definition}
\newtheorem{definition}[theorem]{Definition}
\numberwithin{equation}{section}
\begin{document}

%%%%% To ease editing, for IMPAN journals add:

\baselineskip=17pt

%%%%%%%%%%%%%%%%

\title[The Generalized  Grand Wiener Amalgam Spaces and the boundedness of Hardy-Littlewood maximal operators]{The Generalized  Grand Wiener Amalgam Spaces and the boundedness of Hardy-Littlewood maximal operators}

\author[A.Turan Gurkanli]{A.Turan Gurkanli}
\address{Istanbul Arel University Faculty of Science and Letters\\ Department
of Mathematics and Computer Sciences\\
\'Istanbul\\
Turkey}
\email{turangurkanli@arel.edu.tr}
%\author 
%\adress{.}
%\email{.}

\date{}

\begin{abstract}
In \cite{g5}, we defined and investigated the grand Wiener amalgam space  $W(L^{p),\theta_1}(\Omega), L^{q),\theta_2}(\Omega))$ , where $1<p,q<\infty, \theta_1>0, \theta_2>0$, $\Omega\subset\mathbb R^{n} $ and the Lebesgue measure of $\Omega$ is finite.  In the present paper we generalize this space and define the generalized grand Wiener amalgam space $W(L_{a}^{p)}(\mathbb R^{n}), L_{b}^{q)}(\mathbb R^{n})),$ where $L_{a}^{p)}(\mathbb R^{n})$ and $L_{b}^{q)}(\mathbb R^{n}),$ are the generalized grand Lebesgue spaces, (see \cite{u}, \cite{su3}). Later we  investigate some basic properties. Next  we study embeddings for these spaces and we discuss boundedness and unboundedness of the Hardy-Littlewood maximal operator between some generalized grand Wiener amalgam spaces.
 
\end{abstract}
\maketitle

\subjclass[2010]{Primary 46E30; Secondary 46E35; 46B70}

\keywords{Lebesgue space, generalized grand Lebesgue sequence space, Wiener amalgam space}

\maketitle

\section{Preliminaries}

Let $\le p\le \infty$ and $\Omega\subseteq R^n $ be an open subset. We denote by  $\mid A\mid$  the Lebesgue measure of a measurable set $A\subset R^n$ .  The translation and modulation operators are given by%
\begin{align*}
T_{x}f\left( t\right) =f\left( t-x\right) ,\text{ }M_{\xi }f\left( t\right)
=e^{ i<\xi, t>}f\left( t\right) ,\text{ }t,\text{ }x,\text{ }\xi \in\mathbb R^{n}.
\end{align*} 
 Let $\omega$  be a real-valued function defined on $\mathbb R^{n}.$  If $\omega$ is  positive, measurable and   locally integrable function  vanishing on a set of zero measure, it is called a weight function.  We define the weighted space $L^p(\Omega,\omega)$ with the norm
\begin{align}
\left\Vert {f}\right\Vert_{{L^p}(\Omega,\omega)}=\left( \int_{\Omega }\left\vert f\right\vert ^{p }\omega(x)dx\right) ^{\frac{1}{p }},1< p<\infty,%  \tag{1}
\end{align}
and 
\begin{align*}
\|f\|_{{L^{\infty}}(\Omega,\omega)}=esssup_{x\in\Omega}{|f(x)|\omega(x)}, % (see \cite {fg},\cite {rgl}).
\end{align*}
 (see  \cite {cr} \cite {fg},\cite {rgl}). A weight function $\omega$ is called \textit{ submultiplicative}, if
\begin{align*}
 \omega(x+y)\leq\omega(x)\omega(y),\ \ \forall x,y\in \mathbb R^{n}.
\end{align*}
  A weight function $\omega$ is called \textit{Beurling's weight function} on $\mathbb R^n$ if submultiplicative and $\omega(x)\ge1, $ \cite{w}. 
The weighted space  $L^p(\Omega,\omega)$ is called solid space, if $g\in L^{p}{(\Omega,\omega}), f\in L^1_{loc}(\Omega,\omega)$ and $|f(x)|\leq|g(x)|$ l.a.e, implies $f\in L^{p}{(\Omega,\omega)} $ and $\|f\|_{{{L^p}(\Omega,\omega)}}\leq\|g\|_{{{L^p}(\Omega,\omega)}}.$

 Let $|\Omega|<\infty.$
The grand Lebesgue space $L^{p)}\left( \Omega \right) $ was introduced by Iwaniec-Sbordone in \cite{is}. This Banach space is defined by the norm
\begin{align}
\left\Vert {f}\right\Vert _{p)}=\sup_{0<\varepsilon \leq p-1}\left(
\varepsilon \int_{\Omega }\left\vert f\right\vert ^{p-\varepsilon }dx
\right) ^{\frac{1}{p-\varepsilon }}%\tag{2}
\end{align}
where $1<p<\infty . $ For $0<\varepsilon \leq p-1,$ $L^{p}\left( \Omega \right) \subset L^{p)}\left( \Omega \right) \subset \L^{p-\varepsilon }\left( \Omega \right)$
 hold. For some properties and applications of $L^{p)}\left( \Omega \right) $  we refer to papers \cite{gis} and \cite{g4}. An application to amalgam spaces we refer to paper \cite{g5}. 
 Sometimes in definition of grand  Lebesgue space a parameter $\theta>0$ is added with the change of the factor $\varepsilon$ to $\varepsilon^{\theta},  $ \cite{gis}. We will consider $\theta=1,$ since further the parameter will not play much importance.
  Also the subspace ${C_{0}^{\infty }}$ is not dense in $L^{p)}\left( \Omega \right),$ where  ${C_{0}^{\infty }}$ is the space of infinitely differentiable complex valued functions with compact support. Its closure consists of functions $f \in L^{p)}\left(\Omega \right)$  such that
\begin{equation}
\lim_{\varepsilon \rightarrow 0}\varepsilon ^{\frac{\theta }{p-\varepsilon }
}\left\Vert f\right\Vert _{p-\varepsilon}=0, %\tag{3}
\end{equation}
 \cite{cr}, \cite{gis}.
 It is also known that the grand Lebesgue space  $L^{p),\theta }\left( \Omega \right) ,$ is not reflexive.

In all above mentioned studies only sets $\Omega$ of finite measure were allowed, based on the embedding
\begin{equation*}
L^{p}\left( \Omega \right) \hookrightarrow% L^{p) }\left( \Omega \right) \subset
L^{p-\varepsilon }\left( \Omega \right) .
\end{equation*}

Let $1< p< \infty$ and $\Omega\subseteq R^n $ be an open subset. We define the generalized grand Lebesgue space $L_a^{p)}(\Omega)$ on a set $\Omega$ of possibly infinite measure as follows (see \cite{su3} and  \cite{u}): 
\begin{align}
\notag L_a^{p)}(\Omega)&=\{ f:\left\Vert {f}\right\Vert _{L^{p)}_a(\Omega)} = \sup_{0<\varepsilon \leq p-1}\varepsilon \left( \int_{\Omega }\left\vert f\right\vert ^{p-\varepsilon} a(x)^{\frac{\varepsilon}p} dx
\right) ^{\frac{1}{p-\varepsilon}%\tag{6}
}\\&=\sup_{0<\varepsilon \leq p-1}\varepsilon \left\Vert {f}\right\Vert_{L^{p-\varepsilon }(\Omega, a^{\frac{\varepsilon}p})}<\infty \}.% \ \tag{6} 
%_%{L^{p-\varepsilon }%(\Omega,% a^{\frac\varepsilon}p)}<\infty \}. \ \tag{6} 
\end{align}
The norm of this space is equivalent to the norm
\begin{align}
\notag\left\Vert {f}\right\Vert _{L^{p)}_a(\Omega)}& = \sup_{0<\varepsilon \leq p-1}\left(
\varepsilon \int_{\Omega }\left\vert f\right\vert ^{p-\varepsilon} a(x)^{\varepsilon} dx
\right) ^{\frac{1}{p-\varepsilon}%\tag{6}
}\\&=\sup_{0<\varepsilon \leq p-1}\varepsilon ^{\frac{1 }{p-\varepsilon }}\left\Vert {f}\right\Vert
_{L^{p-\varepsilon }(\Omega, a^{\varepsilon})}.% \tag{7} 
\end{align}
 We call $a(x)$ the grandizer of the space $L_a^{p)}(\Omega)$. It is known that $L_a^{p)}(\Omega)$ is a Banach space   \cite{u}.
If $\Omega$ is bounded and $a(x)\equiv1$ then there holds the embedding $$L^{p}( \Omega)\hookrightarrow L_a^{p)}(\Omega).$$ 
\section{Generalized Grand Wiener Amalgam Spaces and some of its basic properties}
Let $1< p< \infty$ and $\Omega\subseteq \mathbb R^{n} $ be an open subset. The space $(L_a^{p)}(\Omega))_{loc}$ %Let $1\leq p,q\leq \infty .$ The space $( L^{p),\theta}) _{loc}$
consists
of (classes of) measurable functions $f$ $:\Omega \rightarrow \mathbb{C}$
such that $f\chi _{K}\in L_a^{p)}(\Omega),$ for any compact subset $K\subset \Omega ,$
 where $\chi _{K}$ is the characteristic function of $K.$ 
It is known by Lemma 3.1 in \cite{su3} and Lemma 3 in \cite{u} that the embedding $$L^{p}( \Omega)\hookrightarrow L_a^{p)}(\Omega).$$ holds if and only if $a\in L^{1}( \Omega).$ This implies that  if $a\in L^{1}( \Omega)$ then $ L^{p}(\Omega) _{loc}\hookrightarrow L_a^{p)}(\Omega)_{loc}.$ 

Now  in the spirit of \cite{fe},\cite{g5},\cite{h} we will give the definition of grand Wiener amalgam space.
\begin{definition}
Let $1< p,q<\infty$ and $ a(x),b(x)$  be weight functions on $\mathbb R^{n}$.  Fix a compact $ Q\subset\mathbb R^{n}$ with nonempty interior. The generalized grand Wiener amalgam space $W(L_{a}^{p)}(\mathbb R^{n}), L_{b}^{q)}\mathbb R^{n}))$ consists of all functions ( classes of ) $f\in( L_{a}^{p)}(\mathbb R^{n}))_{loc} $ such that the control function
\begin{align*}
F^{p)}_{f,u}(x) &=\| f.\chi _{Q+x}\| _{L_{a}^{p)}(\mathbb R^{n})} \\
&=\sup_{0<\varepsilon \leq p-1}\varepsilon \left( \int_{\mathbb R^{n}}\left\vert f(t)\chi _{Q+x}(t)\right\vert ^{p-\varepsilon}a(t)^{\frac{\varepsilon}p}dt \right) ^{\frac{1}{p-\varepsilon }} \\
&=\sup_{0<\varepsilon \leq p-1}\varepsilon\left\Vert { f.\chi _{Q+x}}\right\Vert
_{L^{p-\varepsilon }(\mathbb R^{n}, a^{\frac{\varepsilon}p})}
\end{align*}
lies in $ L_{b}^{q)}(\mathbb R^{n}).$ The norm on $W(L_{a}^{p)}(\mathbb R^{n}), L_{b}^{q)}(\mathbb R^{n})),$ (or shortly $W(L_{a}^{p)}, L_{b}^{q)}),$  is
\begin{equation}
\|f\|_{W(L_{a}^{p)}(\mathbb R^{n}), L_{b}^{q)}(\mathbb R^{n}))}=\|F^{p)}_f\|_{ L_{b}^{q)}(\mathbb R^{n})}=\|\| f.\chi _{Q+x}\| _{(L_{a}^{p)}(\mathbb R^{n})}\|_{ L_{b}^{q)}(\mathbb R^{n}).}
\end{equation}
\end{definition}
\begin{proposition}
 The generalized grand Wiener amalgam space  $W(L_{a}^{p)}(\mathbb R^{n}), L_{b}^{q)}(\mathbb R^{n}))$  is translation and modulation invariant.
\end{proposition}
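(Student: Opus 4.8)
The plan is to treat the two invariances separately. Modulation invariance is immediate: since $|M_{\xi}f(t)|=|f(t)|$ for all $t\in\mathbb R^{n}$, and since the local quantity $g\mapsto\|g\chi_{Q+x}\|_{L_{a}^{p)}(\mathbb R^{n})}$ as well as the outer norm $\|\cdot\|_{L_{b}^{q)}(\mathbb R^{n})}$ depend only on the modulus of the function (each inner expression $\int|g|^{p-\varepsilon}a^{\varepsilon/p}$ is unchanged on replacing $g$ by $|g|$, and both spaces are solid), one gets $F^{p)}_{M_{\xi}f,u}(x)=F^{p)}_{f,u}(x)$ for every $x$, hence $\|M_{\xi}f\|_{W(L_{a}^{p)},L_{b}^{q)})}=\|f\|_{W(L_{a}^{p)},L_{b}^{q)})}$. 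So the real content is translation invariance.

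For translation, fix $y\in\mathbb R^{n}$. I would first record the elementary identity
\[
T_{y}f\cdot\chi_{Q+x}\;=\;T_{y}\!\left(f\cdot\chi_{Q+x-y}\right),
\]
since both sides evaluated at $t$ equal $f(t-y)\chi_{Q+x}(t)$. This reduces everything to understanding how $T_{y}$ acts on the local space $L_{a}^{p)}(\mathbb R^{n})$. In the defining supremum for $\|T_{y}g\|_{L_{a}^{p)}(\mathbb R^{n})}$ I would change variables $t=s+y$; this leaves $|g|^{p-\varepsilon}$ untouched but turns the grandizer $a(t)$ into $a(s+y)$, so the crux is comparing $a(\cdot+y)$ with $a$. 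Invoking submultiplicativity of the weight, $a(s+y)\le a(s)\,a(y)$, one has $a(s+y)^{\varepsilon/p}\le a(y)^{\varepsilon/p}\,a(s)^{\varepsilon/p}$; pulling the constant out of the integral and then out of the power $\tfrac{1}{p-\varepsilon}$ produces the factor $a(y)^{\varepsilon/(p(p-\varepsilon))}$, whose exponent lies in $\big(0,\tfrac{p-1}{p}\big]$ for $\varepsilon\in(0,p-1]$ and which is therefore bounded uniformly in $\varepsilon$ by a finite constant $C_{a}(y):=\max\{1,a(y)\}$. Bounding the supremum of a product by the product of the suprema gives $\|T_{y}g\|_{L_{a}^{p)}(\mathbb R^{n})}\le C_{a}(y)\,\|g\|_{L_{a}^{p)}(\mathbb R^{n})}$, and the same computation with $b$ in place of $a$ gives $\|T_{y}h\|_{L_{b}^{q)}(\mathbb R^{n})}\le C_{b}(y)\,\|h\|_{L_{b}^{q)}(\mathbb R^{n})}$.

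Assembling: by the displayed identity and the local estimate,
\[
F^{p)}_{T_{y}f,u}(x)=\big\|T_{y}(f\chi_{Q+x-y})\big\|_{L_{a}^{p)}(\mathbb R^{n})}\le C_{a}(y)\,\|f\chi_{Q+x-y}\|_{L_{a}^{p)}(\mathbb R^{n})}=C_{a}(y)\,\big(T_{y}F^{p)}_{f,u}\big)(x),
\]
so $F^{p)}_{T_{y}f,u}\le C_{a}(y)\,T_{y}F^{p)}_{f,u}$ pointwise. Taking $L_{b}^{q)}(\mathbb R^{n})$-norms and using that $L_{b}^{q)}(\mathbb R^{n})$ is solid (being a supremum of solid weighted Lebesgue norms) together with the translation estimate for that space,
\[
\|T_{y}f\|_{W(L_{a}^{p)},L_{b}^{q)})}\le C_{a}(y)\,\big\|T_{y}F^{p)}_{f,u}\big\|_{L_{b}^{q)}(\mathbb R^{n})}\le C_{a}(y)\,C_{b}(y)\,\|f\|_{W(L_{a}^{p)},L_{b}^{q)})}<\infty,
\]
which shows that $T_{y}f$ belongs to the space whenever $f$ does. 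The main obstacle — indeed the only place any genuine hypothesis on $a$ and $b$ enters — is the comparison of $a(\cdot+y)$ with $a$: for an arbitrary merely locally integrable weight, $T_{y}$ need not be bounded on $L_{a}^{p)}$, so submultiplicativity of the weights (in particular the Beurling-weight setting singled out in the Preliminaries) is precisely what makes both the local and the global component translation invariant, and hence the amalgam space as well. For completeness one should also note that the space does not depend on the choice of the window $Q$, by the standard covering argument for amalgam spaces, but that is orthogonal to the invariance statement itself.
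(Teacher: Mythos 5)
Your argument is correct and follows the same overall skeleton as the paper's (reduce to the behaviour of $T_{y}$ on the local space, relate $F^{p)}_{T_{y}f}$ to $T_{y}F^{p)}_{f}$, then pass to the $L_{b}^{q)}$-norm; modulation invariance via $|M_{\xi}f|=|f|$ is identical in both). The genuine difference is in the key local step. The paper simply cites translation invariance of $L_{a}^{p)}(\mathbb R^{n})$ from the literature and then asserts the exact identity $\|(T_{y}f)\chi_{Q+x}\|_{L_{a}^{p)}}=\|f\chi_{Q+x-y}\|_{L_{a}^{p)}}$, i.e.\ $F^{p)}_{T_{y}f}=T_{y}F^{p)}_{f}$; after the change of variables this equality requires the grandizer to satisfy $a(\cdot+y)=a(\cdot)$, so as written it is only literally true for translation-invariant weights. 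You instead prove the translation estimate from scratch: the substitution $t=s+y$ turns $a(t)$ into $a(s+y)$, submultiplicativity gives $a(s+y)^{\varepsilon/p}\le a(y)^{\varepsilon/p}a(s)^{\varepsilon/p}$, and the resulting factor $a(y)^{\varepsilon/(p(p-\varepsilon))}$ is bounded uniformly in $\varepsilon\in(0,p-1]$, yielding $\|T_{y}g\|_{L_{a}^{p)}}\le C_{a}(y)\|g\|_{L_{a}^{p)}}$ and hence the pointwise bound $F^{p)}_{T_{y}f}\le C_{a}(y)\,T_{y}F^{p)}_{f}$, which you then push through $L_{b}^{q)}$ using solidity. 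This buys an explicit, self-contained quantitative bound $\|T_{y}f\|_{W}\le C_{a}(y)C_{b}(y)\|f\|_{W}$ in place of an appeal to an external reference, at the cost of needing the weights to be submultiplicative. You are right to flag that this hypothesis is genuinely used: it does not appear in the statement of the proposition (Definition 2.1 only assumes $a,b$ are weight functions), but some such assumption is unavoidable for translation invariance — the paper's own proof tacitly needs it too — so your version makes explicit a hypothesis the paper leaves implicit rather than introducing a gap of your own.
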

\begin{proof}
 Let $ f\in W(L_{a}^{p)}(\mathbb R^{n}), L_{b}^{q)}\mathbb R^{n})).$ Then $ f\chi_{Q+x}\in L_{a}^{p)} (\mathbb R^{n})$ and  $F^{p)}_f(x) =\| f.\chi _{Q+x}\| _{L_{a}^{p)}(\mathbb R^{n})}\in  L_{b}^{q)}(\mathbb R^{n})).$ It is known from Proposition 1 in $[18]$  that  $L_{a}^{p)}(\mathbb R^{n})$ is translation invariant. Thus we have $Tyf\in L_{a}^{p)}(\mathbb R^{n})$ and
\begin{align}
F^{p)}_{T_yf}(x) =\|( T_{y}f).\chi _{Q+x}\| _{L_{a}^{p)}(\mathbb R^{n})}=\| f.\chi _{Q+x-y}\| _{L_{a}^{p)}(\mathbb R^{n})}=(T_{y}F^{p)}_{f})(x) %\tag{14}
\end{align}
 for $y\in \mathbb R^{n}$. Again since  $ L_{b}^{q)}(\mathbb R^{n})$ is translation invariant,  from (3.1) and (3.2), we find
\begin{align*}
\|T_yf\|_{W(L_{a}^{p)}(\mathbb R^{n}), L_{b}^{q)}(\mathbb R^{n}))}=\|(T_{y}F^{p)}_{f})(x)\|_{ L_{b}^{q)}(\mathbb R^{n})}<\infty,
\end{align*}
and so $T_yf\in W(L_{a}^{p)}(\mathbb R^{n}), L_{b}^{q)}(\mathbb R^{n})).$ That means  $ W(L_{a}^{p)}(\mathbb R^{n}), L_{b}^{q)}\mathbb R^{n}))$ is translation invariant.

  Now let $ f\in W(L_{a}^{p)}(\mathbb R^{n}), L_{a}^{q)}(\mathbb R^{n})),$  and $ \text{ }\xi \in\mathbb R^{n}$.
Then
\begin{align}
\notag F^{p)}_{M_\xi f}(x) &=\|({M_\xi f}) \chi _{Q+x}\| _{(L_{a}^{p)}(\mathbb R^{n})} \\
\notag&=\sup_{0<\varepsilon \leq p-1}\varepsilon \left( \int_{\mathbb R^{n} }\left\vert e^{i\xi t}f(t)\chi _{Q+x}(t)\right\vert ^{p-\varepsilon}a(x)^{\frac{\varepsilon}p}dt \right) ^{\frac{1}{p-\varepsilon }} \\
&=\sup_{0<\varepsilon \leq p-1}\varepsilon\left\Vert { f.\chi _{Q+x}}\right\Vert
_{L^{p-\varepsilon }(\mathbb R^{n}, a^{\frac{\varepsilon}p})}=\| f \chi _{Q+x}\| _{L_{a}^{p)}(\mathbb R^{n})}.% \tag{15}
\end{align}
By (3.3) we find 
\begin{align*}
\|M_\xi f\|_{W(L_{a}^{p)}(\mathbb R^{n}), L_{a}^{q)}(\mathbb R^{n}))}&=\|F^{p)}_{M_\xi f}(x)\|_{{ L_{a}^{q)}(\mathbb R^{n})}}\\&=\|\| f \chi _{Q+x}\| _{L_{a}^{p)}(\mathbb R^{n})}\|_{ L_{a}^{q)}(\mathbb R^{n})}\\&=\| f\|_{W(L_{a}^{p)}(\mathbb R^{n}), L_{a}^{q)}(\mathbb R^{n}))}<\infty.
\end{align*}
Thus $W(L_{a}^{p)}(\mathbb R^{n}), L_{b}^{q)}\mathbb R^{n}))$ is modulation invariant. 
\end{proof}
\begin{theorem}
The generalized grand Wiener amalgam space $W(L_{a}^{p)}(\mathbb R^{n}), L_{b}^{q)}(\mathbb R^{n}))$ is a Banach space, and the definition of this space is independent of the choice of $Q$, i.e., different choices of $Q$ define the same space with equivalent norms. 
\begin{proof}
 The proof of this theorem is same as the proof  Proposition 11.3.2, in  \cite{h}. and Theorem 1 in \cite{fe}.
\begin{theorem}
Let $1< p,q< \infty,$ and let $ a_k(x), a(x), $  be weight functions  on $\mathbb R^{n}$, where $k=1,2.$  Then the norm of  $W(L_{a_1}^{p)}(\mathbb R^{n}), L_{a_2}^{q)}(\mathbb R^{n}))$ satisfies the following properties, where $f,g$ and $f_{n
\text{ }}$are in $W(L_{a_1}^{p)}(\mathbb R^{n}), L_{a_2}^{q)}(\mathbb R^{n}))$ and $\lambda
\geq 0.$ 
\end{theorem}

$1.\left\Vert f\right\Vert _{W(L_{a_1}^{p)}(\mathbb R^{n}), L_{a_2}^{q)}(\mathbb R^{n}))
}\geq 0,$

$2.\left\Vert f\right\Vert _{W(L_{a_1}^{p)}(\mathbb R^{n}), L_{a_2}^{q)}(\mathbb R^{n}))}=0 $ if and only if $f=0$ a.e in $\mathbb R^{n}, $

$3.\left\Vert\lambda f\right\Vert _{W(L_{a_1}^{p)}(\mathbb R^{n}), L_{a_2}^{q)}(\mathbb R^{n}))
}=\lambda\left\Vert f\right\Vert _{W(L_{a_1}^{p)}(\mathbb R^{n}), L_{a_2}^{q)}(\mathbb R^{n}))
},$

$4.\left\Vert f+g\right\Vert _{W(L_{a_1}^{p)}(\mathbb R^{n}), L_{a_2}^{q)}(\mathbb R^{n}))
}\leq \left\Vert f\right\Vert _{W(L_{a_1}^{p)}(\mathbb R^{n}), L_{a_2}^{q)}(\mathbb R^{n}))
}+\left\Vert g\right\Vert _{W(L_{a_1}^{p)}(\mathbb R^{n}), L_{a_2}^{q)}(\mathbb R^{n}))
},$

$5.$ if $\left\vert g\right\vert \leq \left\vert f\right\vert $ a.e. in $
\mathbb R^{n} ,$ then $\left\Vert g\right\Vert _{W(L_{a_1}^{p)}(\mathbb R^{n}), L_{a_2}^{q)}(\mathbb R^{n}))
}\leq \left\Vert f\right\Vert _{W(L_{a_1}^{p)}(\mathbb R^{n}), L_{a_2}^{q)}(\mathbb R^{n}))
},$

$6.$ if $0\leq f_{n}\uparrow f$ a.e. in $\mathbb R^{n},$ then $\left\Vert
f_{n}\right\Vert _{W(L_{a_1}^{p)}(\mathbb R^{n}), L_{a_2}^{q)}(\mathbb R^{n}))
}\uparrow
\left\Vert f\right\Vert _{W(L_{a_1}^{p)}(\mathbb R^{n}), L_{a_2}^{q)}(\mathbb R^{n}))
}.$

%7. if $\left\vert E\right\vert<\infty$  then, $ \left\Vert \chi _{E}\right\Vert _{W(L_{a}^{p)}(\Omega), L_{a}^{q)}(\Omega))
%}<+\infty $

The first four properties follow from the definition of the norm  
$\left\Vert .\right\Vert_{W(L_{a_1}^{p)}(\mathbb R^{n}), L_{a_2}^{q)}(\mathbb R^{n}))
},$ and the corresponding properties of the generalized grand Lebesgue space.

\begin{proof}
Proof of property $5.$ 

Let $g\leq f $ a.e in $\mathbb R^{n}$.  Since  $\left\vert g\right\vert \leq \left\vert f\right\vert $ a.e. in $\mathbb R^{n},$  and $L^{p-\varepsilon}(\mathbb R^{n}, a_1^{\frac{\varepsilon}p})$ is solid, then $ \|g\|_{L^{p-\varepsilon}(\mathbb R^{n}, a_1^{\frac{\varepsilon}p})}\leq \|f\|_{L^{p-\varepsilon}(\mathbb R^{n}, a_1^{\frac{\varepsilon}p})}.$ Thus we have
\begin{align}
\notag\left\Vert g\right\Vert _{L_{a_1}^{p)}(\mathbb R^{n})}&=\sup_{0<\varepsilon\leq p-1}\varepsilon \| g\|_{L^{p-\varepsilon}(\mathbb R^{n}, a_1^{\frac{\varepsilon}p})}\\& \leq\sup_{0<\varepsilon\leq p-1}\varepsilon\| f\|_{L^{p-\varepsilon}(\mathbb R^{n}, a_1^{\frac{\varepsilon}p})}=\left\Vert f\right\Vert _{L_{a_1}^{p)}(\mathbb R^{n})}.%\tag{16}
\end{align}
By using  (3.4)  we write
\begin{align*}
 \left\Vert g\right\Vert _{W(L_{a_1}^{p)}(\mathbb R^{n}), L_{a_2}^{q)}(\mathbb R^{n}))
}&=\|F^{p)}_{ g,a_1}(x)\|_{{ L_{a_2}^{q)}(\mathbb R^{n})}}=\|\| g \chi _{Q+x}\| _{L_{a_1}^{p)}(\mathbb R^{n})}\|_{ L_{a_2}^{q)}(\mathbb R^{n})}\\&\leq\|\| f \chi _{Q+x}\| _{L_{a_1}^{p)}(\mathbb R^{n})}\|_{ L_{a_2}^{q)}(\mathbb R^{n})}=\| f\|_{W(L_{a_1}^{p)}(\mathbb R^{n}), L_{a_2}^{q)}(\mathbb R^{n}))}. 
\end{align*}
Proof of property $6.$

If $ 0\leq f_{n}\uparrow f$ a.e in $\mathbb R^{n}$ then
\begin{align}
 \sup_{n}\|f_n\|_{L_{a}^{p)}(\mathbb R^{n})}&= \sup_{n}( \sup_{0<\varepsilon\leq p-1}\varepsilon\|f_n\|_{L^{p-\varepsilon}\notag(\mathbb R^{n}, a^{\frac{\varepsilon}p})})\\&=\sup_{0<\varepsilon\leq p-1}\varepsilon( \sup_{n}\|f_n\|_{L^{p-\varepsilon}\notag(\mathbb R^{n}, a^{\frac{\varepsilon}p})})\\&=\sup_{0<\varepsilon\leq p-1}\varepsilon( \|f\|_{L^{p-\varepsilon}(\mathbb R^{n}, a^{\frac{\varepsilon}p})})=\|f\|_{L_{a}^{p)}(\mathbb R^{n}).}  
\end{align}  
Since $f_n \uparrow f$ a.e in $\mathbb R^{n},$ then$f_n \chi _{Q+x}\uparrow f \chi _{Q+x}$ in $\mathbb R^{n}.$ By (3.5) we have
\begin{align}
F^{p)}_{f_n,a_1}(x)=\|f_n\chi_{Q+x}\|_{L_{a_1}^{p)}(\mathbb R^{n})}\uparrow\|f\chi_{Q+x}\|_{L_{a_1}^{p)}(\mathbb R^{n})}=   F^{p)}_{f,a_1}(x). 
\end{align}
Thus by (3.6)
\begin{align*}
\|f_n\|_{W(L_{a_1}^{p)}(\mathbb R^{n}), L_{a_2}^{q)}(\mathbb R^{n}))}&=\|F^{p)}_{f_n,a_1}(x)\|_{ L_{a_2}^{q)}(\mathbb R^{n}))}\uparrow\|F^{p)}_{f,a_1}(x)\|_{ L_{a_2}^{q)}(\mathbb R^{n}))}\\&=\|f\|_{W(L_{a}^{p)}(\mathbb R^{n}), L_{a}^{q)}(\mathbb R^{n})).}
\end{align*}
%\end{proof}
\end{proof}

\section{Inclusions and consequences}
%The following Proposition is easily proved by using closed graph theorem.
\begin{proposition}
 Let $a_1(x),a_2(x),b_1(x),b_2(x)$ be a Beurling's weight functions. Then 

$$ W(L_{a_1}^{p)},L_{b_1}^{q)})\left(\mathbb R^{n}\right)\subset W(L_{a_2}^{p)},L_{b_2}^{q)})\left(\mathbb R^{n}\right)$$ if and only if there exists $C>0$ such that
\begin{align*}
\|f\|_{W(L_{a_2}^{p)},L_{b_2}^{q)})\left(\mathbb R^{n}\right)}\leq C\|f\|_{W(L_{a_1}^{p)},L_{b_1}^{q)})\left(\mathbb R^{n}\right).}
\end{align*}
\end{proposition}

%\begin{proof}
\begin{proof}
 Suppose  $$ W(L_{a_1}^{p)},L_{b_1}^{q)})\left(\mathbb R^{n}\right)\subset W(L_{a_2}^{p)},L_{b_2}^{q)})\left(\mathbb R^{n}\right).$$  Define the sum norm 
  \begin{align}
\||f|||=\|f\|_{W(L_{a_1}^{p)},L_{b_1}^{q)})\left(\mathbb R^{n}\right)}+\|f\|_{W(L_{a_2}^{p)},L_{b_2}^{q)})\left(\mathbb R^{n}\right)}
  \end{align}
 in $W(L_{a_1}^{p)},L_{b_1}^{q)})\left(\mathbb R^{n}\right).$  Let $\left( f_{n}\right) _{n\in\mathbb{N}}$ be a Cauchy sequence in $W(L_{a_1}^{p)},L_{b_1}^{q)})\left(\mathbb R^{n}\right),\||.\||).$  Then $\left( f_{n}\right) _{n\in \mathbb{N}}$ is a Cauchy sequence in $ W(L_{a_1}^{p)},L_{b_1}^{q)})\left(\mathbb R^{n}\right)$ and $ W(L_{a_2}^{p)},L_{b_2}^{q)})\left(\mathbb R^{n}\right)$. Hence this sequence coverges to  functions $f$ and $g$ in $W(L_{a_1}^{p)},L_{b_1}^{q)})\left(\mathbb R^{n}\right), $ and $W(L_{a_2}^{p)},L_{b_2}^{q)})\left(\mathbb R^{n}\right),$  respectively. It is easy to show that $f=g,$ and so $W(L_{a_1}^{p)},L_{b_1}^{q)})\left(\mathbb R^{n}\right),\||.\||).$   is complete. This shows that the original norm of $ W(L_{a_1}^{p)},L_{b_1}^{q)})\left(\mathbb R^{n}\right)$ and $\||.\||$ are equivalent. Thus there exist $C_1>0, C_2>0$ such that 
\begin{align}
C_2\|f\|_{W(L_{a_2}^{p)},L_{b_2}^{q)})\left(\mathbb R^{n}\right)}\leq\||f|||\leq C_1\|f\|_{W(L_{a_1}^{p)},L_{b_1}^{q)})\left(\mathbb R^{n}\right)}.
\end{align}
This implies
\begin{align}
\|f\|_{W(L_{a_2}^{p)},L_{b_2}^{q)})\left(\mathbb R^{n}\right)}\leq C\|f\|_{W(L_{a_1}^{p)},L_{b_1}^{q)})\left(\mathbb R^{n}\right).}
\end{align}
where $C=\frac{C_1}{C_2}.$
To prove of the other direction is easy.
\end{proof}
\end{proof}
\end{theorem}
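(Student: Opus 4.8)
The plan is to derive all six properties by transferring the corresponding properties of the two grand Lebesgue building blocks through the two-layer structure of the amalgam norm. Recall that $\|f\|_{W(L_{a_1}^{p)}(\mathbb R^{n}), L_{a_2}^{q)}(\mathbb R^{n}))} = \|F^{p)}_{f,a_1}\|_{L_{a_2}^{q)}(\mathbb R^{n})}$, where the control function $F^{p)}_{f,a_1}(x) = \|f\chi_{Q+x}\|_{L_{a_1}^{p)}(\mathbb R^{n})}$ records the local size of $f$ on the box $Q+x$, while the outer norm measures the global behaviour of $x \mapsto F^{p)}_{f,a_1}(x)$. Since $L_{a_1}^{p)}(\mathbb R^{n})$ and $L_{a_2}^{q)}(\mathbb R^{n})$ are normed (indeed Banach) spaces, each of the six assertions will reduce to a pointwise-in-$x$ statement for $F^{p)}_{f,a_1}$, obtained from the relevant property of the inner space $L_{a_1}^{p)}(\mathbb R^{n})$, followed by an application of the same property for the outer space $L_{a_2}^{q)}(\mathbb R^{n})$.

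For properties $1$ and $3$ (non-negativity and positive homogeneity) I would simply observe that the control function is non-negative and that $F^{p)}_{\lambda f,a_1} = \lambda F^{p)}_{f,a_1}$ for $\lambda \geq 0$, both being immediate from the homogeneity of the inner norm and of the $L^{p-\varepsilon}$ norms inside the defining supremum; the outer norm then inherits these. For property $2$, the vanishing $\|f\|_{W} = 0$ forces $F^{p)}_{f,a_1}(x) = 0$ for almost every $x$ by definiteness of the $L_{a_2}^{q)}$ norm, hence $f\chi_{Q+x} = 0$ a.e.\ for a.e.\ $x$; since the translates $Q+x$ cover $\mathbb R^{n}$, this yields $f = 0$ a.e. For the triangle inequality (property $4$) the key step is the pointwise subadditivity $F^{p)}_{f+g,a_1}(x) \leq F^{p)}_{f,a_1}(x) + F^{p)}_{g,a_1}(x)$, which follows from the triangle inequality in $L_{a_1}^{p)}(\mathbb R^{n})$ applied to $(f+g)\chi_{Q+x}$; combining this with the solidity and the triangle inequality of $L_{a_2}^{q)}(\mathbb R^{n})$ gives the claim.

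Properties $5$ and $6$ are the ones the author singles out, and I would treat them in the same spirit. For solidity (property $5$), $|g| \leq |f|$ a.e.\ gives $|g\chi_{Q+x}| \leq |f\chi_{Q+x}|$, so solidity of $L_{a_1}^{p)}(\mathbb R^{n})$ yields $F^{p)}_{g,a_1}(x) \leq F^{p)}_{f,a_1}(x)$ pointwise, and solidity of $L_{a_2}^{q)}(\mathbb R^{n})$ finishes it. For the Fatou/monotone-convergence property (property $6$), $0 \leq f_n \uparrow f$ gives $f_n\chi_{Q+x} \uparrow f\chi_{Q+x}$, and I would first show $F^{p)}_{f_n,a_1}(x) \uparrow F^{p)}_{f,a_1}(x)$ for each fixed $x$, and then apply the corresponding monotone-convergence property of the outer norm.

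The main obstacle, and the place where the real work lies, is establishing solidity and especially the Fatou property \emph{for the grand Lebesgue space itself}, since its norm is a supremum over $\varepsilon$ of weighted $L^{p-\varepsilon}$ norms. Solidity is robust, as a supremum of solid seminorms is again solid. The monotone-convergence step needs two ingredients: for each fixed $\varepsilon$ the ordinary monotone convergence theorem gives $\|f_n\|_{L^{p-\varepsilon}(\mathbb R^{n}, a^{\varepsilon/p})} \uparrow \|f\|_{L^{p-\varepsilon}(\mathbb R^{n}, a^{\varepsilon/p})}$, and then one must interchange $\sup_n$ with $\sup_{0<\varepsilon\leq p-1}$. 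This interchange is legitimate because both operations are suprema and a supremum over a product index set equals the iterated supremum taken in either order; verifying it carefully is the crux of the argument, after which the same reasoning applied to the outer $L_{a_2}^{q)}$ norm completes property $6$.
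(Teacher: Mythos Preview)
Your proposal is correct and follows essentially the same route as the paper: properties $1$--$4$ are dismissed as immediate from the norm definition and the corresponding properties of the grand Lebesgue building blocks, property $5$ is obtained by applying solidity first at the inner $L_{a_1}^{p)}$ level to get $F^{p)}_{g,a_1}(x)\le F^{p)}_{f,a_1}(x)$ and then at the outer $L_{a_2}^{q)}$ level, and property $6$ is reduced to the Fatou/monotone-convergence property of the grand Lebesgue norm via the interchange $\sup_n \sup_{0<\varepsilon\le p-1}=\sup_{0<\varepsilon\le p-1}\sup_n$ combined with classical monotone convergence in each weighted $L^{p-\varepsilon}$. You are, if anything, slightly more explicit than the paper about \emph{why} the interchange of suprema is legitimate.
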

\begin{theorem}
 Let $p,q>1$ and  $a(x)\in L^{1}(\mathbb R^{n})$ ,$ b(x)\in L^{1}(\mathbb R^{n}).$ Then
\\ a)
\begin{equation*}
 W(L^{p}(\mathbb R^{n}), L^{q}(\mathbb R^{n})))\hookrightarrow W(L_{a}^{p)}(\mathbb R^{n}), L_{b}^{q)}(\mathbb R^{n})).
\end{equation*} 
holds.

In addition 
\begin{equation}
\|f\|_{W(L_{a}^{p)}(\mathbb R^{n}), L_{b}^{q)}(\mathbb R^{n}))}\leq C\|f\|_{W(L^{p}(\mathbb R^{n}), L^{q}(\mathbb R^{n}))}%\tag{23}
\end{equation}
for some $C>0.$
\\b) For an arbitrary $\varepsilon$ and $\eta$, with  $0<\varepsilon \leq p-1,$ and  $0<\eta \leq p-1,$ the embedding
\begin{equation*}
  W(L_{a}^{p)}(\mathbb R^{n}), L_{b}^{q)}(\mathbb R^{n}))\hookrightarrow W(L^{p-\varepsilon}(\mathbb R^{n}, a^{\frac{\varepsilon}p}),L^{q-{\eta}}(\mathbb R^{n}, b^{\frac{\eta}q})
\end{equation*} 
holds. 
\end{theorem}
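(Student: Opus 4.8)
The plan is to prove both embeddings by reducing the amalgam-norm estimates to the corresponding statements about the (generalized) grand Lebesgue spaces, which are already available in the excerpt and its references. For part (a), recall that by Lemma 3.1 in \cite{su3} and Lemma 3 in \cite{u}, the hypothesis $a\in L^1(\mathbb R^{n})$ gives the continuous embedding $L^p(\mathbb R^{n})\hookrightarrow L_a^{p)}(\mathbb R^{n})$ with some constant $C_a$; the same applies to $b$ with constant $C_b$. First I would apply this pointwise in the translation variable $x$: since $f\chi_{Q+x}\in L^p(\mathbb R^{n})$ for a.e.\ $x$ when $f\in W(L^p,L^q)$, we get $F^{p)}_{f,a}(x)=\|f\chi_{Q+x}\|_{L_a^{p)}(\mathbb R^{n})}\le C_a\|f\chi_{Q+x}\|_{L^p(\mathbb R^{n})}=C_a\,F^p_f(x)$, where $F^p_f$ is the classical control function. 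Then I would take $L_b^{q)}(\mathbb R^{n})$-norms of both sides: using property 5 (solidity, already proved in Theorem 3.4) together with the embedding $L^q(\mathbb R^{n})\hookrightarrow L_b^{q)}(\mathbb R^{n})$ applied to the function $F^p_f$, we obtain
\begin{align*}
\|f\|_{W(L_a^{p)},L_b^{q)})}=\|F^{p)}_{f,a}\|_{L_b^{q)}}\le C_a\|F^p_f\|_{L_b^{q)}}\le C_a C_b\|F^p_f\|_{L^q}=C_a C_b\|f\|_{W(L^p,L^q)},
\end{align*}
which is exactly \eqref{} with $C=C_aC_b$, and in particular shows $W(L^p,L^q)\hookrightarrow W(L_a^{p)},L_b^{q)})$.

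For part (b), the key observation is that the generalized grand Lebesgue norm dominates each of its constituent scaled Lebesgue norms: directly from the definition of $\|\cdot\|_{L_a^{p)}}$ as a supremum over $0<\varepsilon\le p-1$, for the fixed $\varepsilon$ in the statement one has $\varepsilon\,\|g\|_{L^{p-\varepsilon}(\mathbb R^{n},a^{\varepsilon/p})}\le\|g\|_{L_a^{p)}(\mathbb R^{n})}$, i.e.\ $\|g\|_{L^{p-\varepsilon}(\mathbb R^{n},a^{\varepsilon/p})}\le\varepsilon^{-1}\|g\|_{L_a^{p)}(\mathbb R^{n})}$. Applying this with $g=f\chi_{Q+x}$ gives the pointwise-in-$x$ bound $F^{p-\varepsilon}_{f}(x)\le\varepsilon^{-1}F^{p)}_{f,a}(x)$ for the classical $L^{p-\varepsilon}(\mathbb R^n,a^{\varepsilon/p})$-control function. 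Then I would take $L^{q-\eta}(\mathbb R^{n},b^{\eta/q})$-norms, again invoking the analogous inequality $\|h\|_{L^{q-\eta}(\mathbb R^{n},b^{\eta/q})}\le\eta^{-1}\|h\|_{L_b^{q)}(\mathbb R^{n})}$ and the solidity of $L^{q-\eta}(\mathbb R^{n},b^{\eta/q})$ applied to $h=F^{p)}_{f,a}$, to conclude
\begin{align*}
\|f\|_{W(L^{p-\varepsilon}(\mathbb R^{n},a^{\varepsilon/p}),L^{q-\eta}(\mathbb R^{n},b^{\eta/q}))}\le\eta^{-1}\|F^{p)}_{f,a}\|_{L_b^{q)}}=\eta^{-1}\|f\|_{W(L_a^{p)},L_b^{q)})},
\end{align*}
after absorbing the $\varepsilon^{-1}$ factor as well (one gets the constant $\varepsilon^{-1}\eta^{-1}$, or its reciprocal depending on which normalization of the control function is used), which yields the embedding.

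One technical point deserves care: the control function in part (b) should be formed with the same fixed compact set $Q$, and one must check that $f\chi_{Q+x}$ indeed lies in $L^{p-\varepsilon}(\mathbb R^n,a^{\varepsilon/p})$ for a.e.\ $x$ whenever $f\in(L_a^{p)})_{\mathrm{loc}}$; this follows because $L_a^{p)}(\mathbb R^n)\subset L^{p-\varepsilon}(\mathbb R^n,a^{\varepsilon/p})$ for each admissible $\varepsilon$ (again from the supremum defining the norm), so that the target amalgam space is well defined. I would also remark that, strictly speaking, both steps only use solidity of the outer space and the two elementary facts ``$L^p\hookrightarrow L_a^{p)}$ when $a\in L^1$'' and ``$L_a^{p)}\hookrightarrow L^{p-\varepsilon}(a^{\varepsilon/p})$ for each $\varepsilon$,'' so the proof is essentially a transfer of scalar inequalities through the two-layered amalgam construction. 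The main obstacle is not analytic depth but bookkeeping — keeping the weights $a^{\varepsilon/p}$ versus $a^{\varepsilon}$ consistent with whichever of the two equivalent norms \eqref{} one adopts, and tracking the $\varepsilon,\eta$ constants correctly through both layers; there is no genuine difficulty once the pointwise-in-$x$ reductions are set up.
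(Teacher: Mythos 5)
Your proof is correct and follows essentially the same route as the paper: both parts reduce to the scalar embeddings $L^{p}\hookrightarrow L_{a}^{p)}$ (valid for $a\in L^{1}$) and $L_{a}^{p)}\hookrightarrow L^{p-\varepsilon}(\mathbb R^{n},a^{\varepsilon/p})$, applied pointwise in $x$ to $f\chi_{Q+x}$ and then pushed through the outer norm using solidity. The only divergence is in part (a), where the paper deduces the constant $C$ abstractly from its earlier proposition that a set inclusion of these amalgam spaces forces a norm inequality (via completeness of the sum norm), whereas you track the explicit constant $C=C_aC_b$ directly; your version is slightly more informative and avoids the closed-graph-type appeal, but it is not a genuinely different argument.
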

\begin{proof}
a) Let $a\in L^{1}(\mathbb R^{n})$  and $b\in L^{1}(\mathbb R^{n}).$  By Lemma 3 in \cite{u}, and \cite{su3} 
\begin{align}
L^{p}(\mathbb R^{n})\hookrightarrow L_{a}^{p)}(\mathbb R^{n}) , L^{q}(\mathbb R^{n})\hookrightarrow L_{b}^{q)}(\Omega). %\tag{24}
\end{align}
This implies that 
\begin{align}
(L^{p}(\mathbb R^{n}))_{loc}\hookrightarrow( L_{a}^{p)}))_{loc} , (L^{q}(\mathbb R^{n}))_{loc}\hookrightarrow( L_{b}^{q)}(\mathbb R^{n}))_{loc}.% \tag{25}
\end{align} 
Let $f\in W(L^{p}(\mathbb R^{n}), L^{q}(\mathbb R^{n}))). $ Then $f\in( L^{p}(\mathbb R^{n}))_{loc}$ and $F^{p}_{f,a}(x) =\| f.\chi _{Q+x}\| _{L^{p}(\mathbb R^{n})}\in L^{q}(\mathbb R^{n}) .$ Thus by $(3.5)$ and $(3.6)$ we have $f\in( L_{a}^{p)}(\mathbb R^{n}))_{loc}$  and $F^{p)}_{f,a}(x)\in  L_{b}^{q)}(\mathbb R^{n}).$ Hence $f\in W(L_{a}^{p)}(\mathbb R^{n}), L_{b}^{q)}(\mathbb R^{n})) $ and so
\begin{equation*}
W(L^{p}(\mathbb R^{n}), L^{q}(\mathbb R^{n})))\hookrightarrow W(L_{a}^{p)}(\mathbb R^{n}), L_{b}^{q)}(\mathbb R^{n})). 
\end{equation*}
By Proposition 3.1 we obtain
%\begin{align}
%F^{p)}_f(x) &\leq C(p,a)\left\Vert f.\chi _{Q+x}\right\Vert_{{L^p}(\mathbb R^{n}},%\tag{26}
%\end{align}
%where $C(p,a)$ does  not depend on variable $x.$ From (4.7) we obtain 
\begin{align*}
\|f\|_{W(L_{a}^{p)}(\mathbb R^{n}), L_{b}^{q)}(\mathbb R^{n}))}&%=\|F^{p)}_{f,a}(x)\|_{ L_{b}^{q)}(\mathbb R^{n}))}\leq C(q,b))\left\Vert {F^{p)}_f(x}\right\Vert_{{L^q}(\mathbb R^{n}}\\& \leq C(p,a)C(q,b)\| \left\Vert f.\chi _{Q+x}\right\Vert_{{L^p}(\mathbb R^{n})}\|_{ L^{q}(\mathbb R^{n}))}\\&=
\leq C\|f\|_{W(L^{p}(\mathbb R^{n}), L^{q}(\mathbb R^{n})))},
\end{align*}
%where $C=C(p,a).C(q,b).$
for some constant $C>0.$

b)
For the proof of this part take any $ f\in W(L_{a}^{p)}(\mathbb R^{n}), L_{b}^{q)}(\mathbb R^{n})).$ 
Since $L_{a}^{p)}(\mathbb R^{n})\hookrightarrow L^{p-\varepsilon}(\mathbb R^{n}, a^{\frac{\varepsilon}p}),$ and  $L_{b}^{q)}(\mathbb R^{n})\hookrightarrow L^{q-\varepsilon}(\mathbb R^{n}, b^{\frac{\varepsilon}q}),$ then
\begin{align*}
F_{f,_a}^{p-\varepsilon }\left( x\right) =\left\Vert f.\chi _{Q+x}\right\Vert
_{L^{p-\varepsilon }(\mathbb R^{n}, a^{\frac{\varepsilon}p})}\leq C(p,a)\| f.\chi _{Q+x}\| _{L_{a}^{p)}(\mathbb R^{n})}= C(p,a) F_{f}^{p)}\left( x\right). 
\end{align*}
This implies
\begin{align*}
\|f\|_{W(L^{p-\varepsilon}(\mathbb R^{n},a^{\frac{\varepsilon}p}),L^{q-\eta}(\mathbb R^{n}, b^{\frac{\eta}q}})&=\|\left\Vert f.\chi _{Q+x}\right\Vert_{L^{p-\varepsilon }(\mathbb R^{n}, a^{\frac{\varepsilon}p})}\|_{L^{q-\eta}(\mathbb R^{n}, b^{\frac{\eta}q})}\\ &\leq\|C(p,a)\| f.\chi _{Q+x}\| _{L_{a}^{p)}(\mathbb R^{n})}\|_{{L^{q-\eta }(\mathbb R^{n}, b^{\frac{\eta}q})}}\\&\leq C(p,a)C(q,b)\|\left\Vert f.\chi _{Q+x}\right\Vert_{L_{a}^{p)}(\mathbb R^{n})}\|_{L_{b}^{q)}(\mathbb R^{n})}\\&= C(p,a)C(q,b)\|f\|_{W(L_{a}^{p)}(\mathbb R^{n}), L_{b}^{q)}(\mathbb R^{n}))}. 
\end{align*} 
This completes the proof.
\end{proof}
\begin{proposition}
 Let $1\leq p_1,p_2,q<\infty$, $p_1\leq p_2.$   Then  following embeddins 
\begin{align*}
 W(L_{a}^{p_2)}(\mathbb R^{n}), L_{b}^{q)}(\mathbb R^{n})) \hookrightarrow W(L_a^{p_1)}(\mathbb 
R^{n}),L_b^{q)}(\mathbb R^{n}),
\end{align*}
hold.
\end{proposition}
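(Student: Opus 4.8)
The plan is to reduce the embedding to a \emph{pointwise} inequality between the two control functions and then transfer it through the global component $L_{b}^{q)}(\mathbb R^{n})$ by solidity. Precisely, I will produce a constant $C=C(p_{1},p_{2},Q)>0$, independent of $x$, with
\begin{equation*}
F^{p_{1})}_{f,a}(x)=\|f\chi_{Q+x}\|_{L_{a}^{p_{1})}(\mathbb R^{n})}\le C\,\|f\chi_{Q+x}\|_{L_{a}^{p_{2})}(\mathbb R^{n})}=C\,F^{p_{2})}_{f,a}(x)
\end{equation*}
for every $f\in(L_{a}^{p_{2})}(\mathbb R^{n}))_{loc}$ and every $x\in\mathbb R^{n}$. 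Granting this, each weighted space $L^{q-\eta}(\mathbb R^{n},b^{\eta/q})$ is solid, so the supremum of these norms, $\|\cdot\|_{L_{b}^{q)}(\mathbb R^{n})}$, is monotone; applying it to the pointwise bound between the nonnegative functions $F^{p_{1})}_{f,a}$ and $C\,F^{p_{2})}_{f,a}$ yields
\begin{equation*}
\|f\|_{W(L_{a}^{p_{1})},L_{b}^{q)})}=\|F^{p_{1})}_{f,a}\|_{L_{b}^{q)}(\mathbb R^{n})}\le C\,\|F^{p_{2})}_{f,a}\|_{L_{b}^{q)}(\mathbb R^{n})}=C\,\|f\|_{W(L_{a}^{p_{2})},L_{b}^{q)})},
\end{equation*}
which in particular forces $f\chi_{Q+x}\in L_{a}^{p_{1})}(\mathbb R^{n})$ for all $x$, so $f\in(L_{a}^{p_{1})}(\mathbb R^{n}))_{loc}$ and $f\in W(L_{a}^{p_{1})},L_{b}^{q)})$; this is the desired continuous embedding.

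It remains to prove the local estimate; fix $x$ and set $g=f\chi_{Q+x}$. If $a\ge 1$ (for instance if $a$ is a Beurling weight, as in the preceding proposition), it holds with $C=1$: for $\varepsilon\in(0,p_{1}-1]$ put $\delta=\varepsilon+(p_{2}-p_{1})$, so $p_{1}-\varepsilon=p_{2}-\delta$, $\delta\in(0,p_{2}-1]$ and $\varepsilon\le\delta$; the identity $\delta p_{1}-\varepsilon p_{2}=(p_{2}-p_{1})(p_{1}-\varepsilon)\ge 0$ gives $\varepsilon/p_{1}\le\delta/p_{2}$, hence $a^{\varepsilon/p_{1}}\le a^{\delta/p_{2}}$ pointwise and
\begin{align*}
\varepsilon\Big(\int_{\mathbb R^{n}}|g|^{p_{1}-\varepsilon}a^{\varepsilon/p_{1}}\Big)^{1/(p_{1}-\varepsilon)}
&\le\varepsilon\Big(\int_{\mathbb R^{n}}|g|^{p_{2}-\delta}a^{\delta/p_{2}}\Big)^{1/(p_{2}-\delta)}\\
&\le\delta\Big(\int_{\mathbb R^{n}}|g|^{p_{2}-\delta}a^{\delta/p_{2}}\Big)^{1/(p_{2}-\delta)}\le\|g\|_{L_{a}^{p_{2})}(\mathbb R^{n})}.
\end{align*}
Taking the supremum over $\varepsilon$ gives $\|g\|_{L_{a}^{p_{1})}}\le\|g\|_{L_{a}^{p_{2})}}$, the pointwise control-function estimate with $C=1$.

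For an arbitrary weight $a$ the powers of $a$ can no longer be ordered, and the remedy is to use that $Q+x$ has finite, translation-independent measure. Given $\varepsilon\in(0,p_{1}-1]$ choose $\delta=\delta(\varepsilon)$ to be the unique solution of $\frac{\varepsilon}{p_{1}}\cdot\frac{p_{2}-\delta}{p_{1}-\varepsilon}=\frac{\delta}{p_{2}}$, i.e. $\delta=\varepsilon p_{2}^{2}/(p_{1}^{2}+\varepsilon(p_{2}-p_{1}))$; one checks $0<\delta\le p_{2}-1$ and $\rho:=(p_{2}-\delta)/(p_{1}-\varepsilon)>1$. H\"older's inequality on $Q+x$ with exponents $\rho,\rho'$ (the grandizer powers matching by the choice of $\delta$) gives
\begin{equation*}
\int_{Q+x}|g|^{p_{1}-\varepsilon}a^{\varepsilon/p_{1}}\le|Q|^{1/\rho'}\Big(\int_{Q+x}|g|^{p_{2}-\delta}a^{\delta/p_{2}}\Big)^{1/\rho},
\end{equation*}
and since $\rho(p_{1}-\varepsilon)=p_{2}-\delta$ this propagates, as in the previous paragraph, to $\varepsilon\|g\|_{L^{p_{1}-\varepsilon}(\mathbb R^{n},a^{\varepsilon/p_{1}})}\le\frac{\varepsilon}{\delta}\,|Q|^{1/(\rho'(p_{1}-\varepsilon))}\,\|g\|_{L_{a}^{p_{2})}}$. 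Both prefactors are continuous in $\varepsilon$ and extend continuously to $\varepsilon=0$, hence are bounded on $(0,p_{1}-1]$ by a constant depending only on $p_{1},p_{2}$ and $|Q|=|Q+x|$; taking the supremum over $\varepsilon$ finishes the local estimate.

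The main obstacle is precisely this matching of the grandizer exponents $a^{\varepsilon/p_{1}}$ and $a^{\delta/p_{2}}$; once the right $\delta=\delta(\varepsilon)$ is identified, the rest is routine bookkeeping (the substitution $\varepsilon\leftrightarrow\delta$, the elementary inequalities $\varepsilon\le\delta$, $\varepsilon/p_{1}\le\delta/p_{2}$, $\rho>1$, $0<\delta\le p_{2}-1$, and monotonicity of $\|\cdot\|_{L_{b}^{q)}}$). Two caveats: the grand indices must be read as $>1$ (the range $0<\varepsilon\le p-1$ is empty when $p=1$), and for $p_{1}=p_{2}$ the two amalgam spaces coincide and there is nothing to prove.
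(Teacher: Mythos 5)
Your proof is correct, and while its overall skeleton (pointwise comparison of control functions, then transfer through $L_{b}^{q)}(\mathbb R^{n})$ by solidity) coincides with the paper's, you do something genuinely different at the decisive step. The paper simply asserts the inequality $\|f\chi_{Q+x}\|_{L_{a}^{p_{1})}}\le C\|f\chi_{Q+x}\|_{L_{a}^{p_{2})}}$ ``by the definitions of norms of the spaces'' and moves on; but that inequality is the entire content of the proposition, and it is \emph{not} a formal consequence of the definitions: on a set of infinite measure, and for a general grandizer $a$, the generalized grand Lebesgue norms are not monotone in $p$. You identify correctly that what rescues the statement is the localization to $Q+x$, a set of finite, translation-invariant measure, and you supply the missing argument — the re-indexing $\varepsilon\mapsto\delta(\varepsilon)$ chosen so that the grandizer exponents $a^{\varepsilon\rho/p_{1}}=a^{\delta/p_{2}}$ match under H\"older with exponent $\rho=(p_{2}-\delta)/(p_{1}-\varepsilon)>1$, together with the uniform bounds on $\varepsilon/\delta$ and $|Q|^{1/(\rho'(p_{1}-\varepsilon))}$ that make the constant independent of $x$. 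I checked the algebra: $\delta=\varepsilon p_{2}^{2}/(p_{1}^{2}+\varepsilon(p_{2}-p_{1}))$ does satisfy $0<\delta\le p_{2}-1$ and $\rho>1$ (the latter reduces to $(p_{1}-\varepsilon)^{2}>0$), and both prefactors are bounded on $(0,p_{1}-1]$. Your preliminary case $a\ge1$ is subsumed by the general one but is a clean sanity check. Two minor remarks: strictly one should also note that $f\chi_{K}\in L_{a}^{p_{1})}$ for \emph{every} compact $K$ (needed for $f\in(L_{a}^{p_{1})})_{loc}$), which follows from the same H\"older estimate with $|K|$ in place of $|Q|$; and your caveat that the indices must be read as $>1$ is well taken, since the hypothesis $1\le p_{1}$ in the statement is vacuous for the grand scale.
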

\begin{proof}
 Let $f\in W(L_{a}^{p_2)}(\mathbb R^{n}), L_{b}^{q)}(\mathbb R^{n})).$ Then $F^{p_2)}_f(x) =\| f.\chi _{Q+x}\| _{L_{a}^{p_2)}(\mathbb R^{n})}\in  L_{b}^{q)}(\mathbb R^{n}).$ Since  $p_1\leq p_2,$  by the definitions of norms of the spaces $ L_{a}^{p_1}(\mathbb R^{n})$  and $ L_{a}^{p_1}(\mathbb R^{n})$ we have
\begin{align}
F^{p_1)}_{f,a}(x) =\| f.\chi _{Q+x}\| _{L_{a}^{p_1)}(\mathbb R^{n})}\leq C\| f.\chi _{Q+x}\| _{L_{a}^{p_2)}(\mathbb R^{n})}=CF^{p_2)}_{f,a}(x)
\end{align}
for some $C>0.$ By the solidness of $L_b^{q}(\mathbb R^{n})$ and (3.4),
\begin{align*}
\|f\|_{W(L_{a}^{p_1)}(\mathbb R^{n}), L_{b}^{q)}(\mathbb R^{n}))}&=\|F^{p_1)}_{f,a}(x)\|_{ L_{b}^{q)}(\mathbb R^{n})}\leq C\|F^{p_2)}_{f,a}(x)\ \|_{ L_{b}^{q)}(\mathbb R^{n})}\\&=C\|f\|_{W(L_{a}^{p_2)}(\mathbb R^{n}), L_{b}^{q)}(\mathbb R^{n})).}
\end{align*}
Hence
\begin{align*}
 W(L_{a}^{p_2)}(\mathbb R^{n}), L_{b}^{q)}(\mathbb R^{n})) \hookrightarrow W(L_a^{p_1)}(\mathbb 
R^{n}),L_b^{q)}(\mathbb R^{n}).
\end{align*}

\end{proof}
The proof of the following Proposition is as Proposition 3.5 in \cite {g5} and Theorem 11.3.3 in \cite {h}.  Therefore, we will not give a proof of this theorem.
\begin{proposition}
Let $1<p_i ,q_i<\infty,  (i=1,2,3).$ If there exist constants $C_1>0, C_2>0$ such that for all $u\in L_{a}^{p)}(\mathbb R^{n})$ and  $v\in L_{a}^{p)}(\Omega)$
\begin{align*}
\left\Vert {uv}\right\Vert _{L^{p_3)}_a(\mathbb R^{n})}\leq C_1\left\Vert {u}\right\Vert _{L^{p_1)}_a(\mathbb R^{n})}\left\Vert {v}\right\Vert _{L^{p_2)}_a(\mathbb R^{n})}
\end{align*}
and for all  $u\in L_{a}^{q)}(\mathbb R^{n})$ and  $v\in L_{a}^{q)}(\mathbb R^{n}),$
\begin{align*}
\left\Vert {uv}\right\Vert _{L^{q_3)}_a(\mathbb R^{n})}\leq C_2\left\Vert {u}\right\Vert _{L^{q_1)}_a(\mathbb R^{n})}\left\Vert {v}\right\Vert _{L^{q_2)}_a(\mathbb R^{n})}
\end{align*}
then there exists $C>0$ such that for all $f\in W(L_a^{p_1)}(\mathbb R^{n}), L_a^{q_1)}(\mathbb R^{n}))) $ and  $g\in W(L_a^{p_2)}(\mathbb R^{n}), L_a^{q_2)}(\mathbb R^{n}))), $ we have $fg\in W(L_a^{p_3)}(\mathbb R^{n}), L_a^{q_3)}(\mathbb R^{n}))) $ and 
\begin{align*}
\left\Vert {fg}\right\Vert _ {W(L_a^{p_3)}(\mathbb R^{n}), L_a^{q_3)}(\mathbb R^{n})))}\leq C\left\Vert {f}\right\Vert _{W(L_{a}^{p_1)}(\mathbb R^{n}), L_{a}^{q_1)}(\mathbb R^{n}))}\left\Vert {g}\right\Vert _{W(L_{a}^{p_2)}(\mathbb R^{n}), L_{a}^{q_2)}(\mathbb R^{n}))}.
\end{align*}
\end{proposition}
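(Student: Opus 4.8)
The plan is to reduce the convolution-type (really, pointwise product) estimate for the Wiener amalgam spaces to the two hypothesized product estimates on the generalized grand Lebesgue spaces, exactly along the lines of the classical amalgam argument (Theorem 11.3.3 in \cite{h}). First I would fix the compact set $Q$ (legitimate by Theorem 3.1, since the space is independent of this choice up to equivalence of norms) and observe that for the product $fg$ the natural comparison of control functions is
\begin{align*}
F^{p_3)}_{fg,a}(x)=\left\Vert (fg)\chi_{Q+x}\right\Vert_{L_a^{p_3)}(\mathbb R^n)}=\left\Vert (f\chi_{Q+x})(g\chi_{Q+x})\right\Vert_{L_a^{p_3)}(\mathbb R^n)},
\end{align*}
where I have used $\chi_{Q+x}^2=\chi_{Q+x}$. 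Applying the first hypothesis with $u=f\chi_{Q+x}\in L_a^{p_1)}(\mathbb R^n)$ and $v=g\chi_{Q+x}\in L_a^{p_2)}(\mathbb R^n)$ gives the pointwise-in-$x$ bound
\begin{align*}
F^{p_3)}_{fg,a}(x)\leq C_1\,F^{p_1)}_{f,a}(x)\,F^{p_2)}_{g,a}(x).
\end{align*}

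Next I would take the $L_a^{q_3)}(\mathbb R^n)$ norm of both sides. Here the key point is that $F^{p_1)}_{f,a}$ lies in $L_a^{q_1)}(\mathbb R^n)$ and $F^{p_2)}_{g,a}$ lies in $L_a^{q_2)}(\mathbb R^n)$ by the very definition of membership of $f$ and $g$ in the respective amalgam spaces, so the second hypothesis (the product estimate on the $q$-scale of generalized grand Lebesgue spaces) applies with $u=F^{p_1)}_{f,a}$ and $v=F^{p_2)}_{g,a}$, yielding
\begin{align*}
\left\Vert F^{p_3)}_{fg,a}\right\Vert_{L_a^{q_3)}(\mathbb R^n)}\leq C_1\left\Vert F^{p_1)}_{f,a}\,F^{p_2)}_{g,a}\right\Vert_{L_a^{q_3)}(\mathbb R^n)}\leq C_1 C_2\left\Vert F^{p_1)}_{f,a}\right\Vert_{L_a^{q_1)}(\mathbb R^n)}\left\Vert F^{p_2)}_{g,a}\right\Vert_{L_a^{q_2)}(\mathbb R^n)},
\end{align*}
which is precisely the desired inequality $\left\Vert fg\right\Vert_{W(L_a^{p_3)},L_a^{q_3)})}\leq C\left\Vert f\right\Vert_{W(L_a^{p_1)},L_a^{q_1)})}\left\Vert g\right\Vert_{W(L_a^{p_2)},L_a^{q_2)})}$ with $C=C_1 C_2$. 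In particular this shows $fg\in W(L_a^{p_3)}(\mathbb R^n),L_a^{q_3)}(\mathbb R^n))$, establishing the membership claim.

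The steps that need a little care — and the only places I expect any friction — are the monotonicity-of-norm facts used implicitly: to pass from the pointwise bound $F^{p_3)}_{fg,a}\leq C_1 F^{p_1)}_{f,a}F^{p_2)}_{g,a}$ to the norm bound I am invoking solidity (property 5 of Theorem 3.3) of $L_a^{q_3)}(\mathbb R^n)$, and I should note that all the control functions involved are nonnegative measurable functions of $x$, so applying the hypotheses to them is legitimate. One should also record that the local-integrability/measurability of $x\mapsto F^{p)}_{f,a}(x)$ is part of the standing setup for amalgam spaces (continuity of the control function in $x$, as in \cite{h}). Beyond these bookkeeping remarks the proof is a direct two-line chaining of the two hypotheses, which is exactly why the authors defer to \cite{g5} and \cite{h}; accordingly I would keep the write-up short, essentially the display chain above preceded by the $\chi_{Q+x}^2=\chi_{Q+x}$ observation.
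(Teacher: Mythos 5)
Your argument is correct and is precisely the standard pointwise-product chaining (via $\chi_{Q+x}^2=\chi_{Q+x}$, the first hypothesis applied for each fixed $x$, then solidity of $L_a^{q_3)}$ and the second hypothesis applied to the control functions) that the paper itself invokes by deferring to Proposition 3.5 of \cite{g5} and Theorem 11.3.3 of \cite{h} rather than writing out a proof. Nothing is missing; the solidity and measurability caveats you flag are exactly the right bookkeeping points.
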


\begin{proposition}
Let $1< p< \infty$  and let $\ a(x)\in L^1(\mathbb R^{n}).$ The closure set  $\bar{C_0^{\infty}}(\mathbb R^{n})|_{W(L_{a}^{p)}, L_{a}^{q)})}$ of the set ${C_0^{\infty}}(\mathbb R^{n})$ in the space $W(L_{a}^{p)}, L_{a}^{q)})(\mathbb R^{n})$ consists of $ f\in W(L_{a}^{p)}, L_{a}^{q)})(\mathbb R^{n})$ such that 
\begin{align}
lim_{\varepsilon\to 0} \varepsilon\|f\|_{W(L^{p-\varepsilon }(\mathbb R^{n}, a^{\frac{\varepsilon}p}),L^{q-\varepsilon }(\mathbb R^{n}, a^{\frac{\varepsilon}q})}=0. %\tag{31}
\end{align}
\end{proposition}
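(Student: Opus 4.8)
The idea is to lift to the amalgam setting the analogous description of the closure of $C_{0}^{\infty}$ in a single generalized grand Lebesgue space $L_{a}^{p)}(\mathbb{R}^{n})$ --- it consists of those $f$ with $\lim_{\delta\to0}\delta\|f\|_{L^{p-\delta}(\mathbb{R}^{n},a^{\delta/p})}=0$, the weighted form of the criterion (1.3), provable by the same truncation plus mollification --- and to transport it through the control function, proving the two inclusions separately.

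For the inclusion $\overline{C_{0}^{\infty}}\subseteq\{f:\lim_{\varepsilon\to0}\varepsilon\|f\|_{W(L^{p-\varepsilon}(a^{\varepsilon/p}),L^{q-\varepsilon}(a^{\varepsilon/q}))}=0\}$ I would first check the limit on the dense set: if $g\in C_{0}^{\infty}$ with $|g|\le M$ and $g$ compactly supported, then $g\chi_{Q+x}$ vanishes for $x$ outside a bounded set $S$, and for $x\in S$ Hölder together with $a\in L^{1}(\mathbb{R}^{n})$ gives $\|g\chi_{Q+x}\|_{L^{p-\varepsilon}(\mathbb{R}^{n},a^{\varepsilon/p})}\le M|Q|^{\frac{1}{p-\varepsilon}(1-\frac{\varepsilon}{p})}\|a\|_{1}^{\frac{\varepsilon}{p(p-\varepsilon)}}$, a bound uniform in $\varepsilon\in(0,p-1]$ and in $x$; a second such bound over $S$ shows $\|g\|_{W(L^{p-\varepsilon}(a^{\varepsilon/p}),L^{q-\varepsilon}(a^{\varepsilon/q}))}$ stays bounded as $\varepsilon\to0$, so the product with $\varepsilon$ tends to $0$. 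For a general $f$ in the closure, choosing $g_{k}\in C_{0}^{\infty}$ with $\|f-g_{k}\|_{W(L_{a}^{p)},L_{a}^{q)})}\to0$, applying the triangle inequality in $L^{p-\varepsilon}(a^{\varepsilon/p})$ and in $L^{q-\varepsilon}(a^{\varepsilon/q})$, and passing to the limit first in $\varepsilon$ and then in $k$ finishes it, provided one has the comparison estimate that is the amalgam form of the inequality built into (1.4): from $\varepsilon H_{h,\varepsilon}(x)\le F^{p)}_{h}(x)$ pointwise in $x$, where $H_{h,\varepsilon}(x)=\|h\chi_{Q+x}\|_{L^{p-\varepsilon}(a^{\varepsilon/p})}$, one takes the $L^{q-\varepsilon}(a^{\varepsilon/q})$-norm and uses $\eta\|\cdot\|_{L^{q-\eta}(a^{\eta/q})}\le\|\cdot\|_{L_{a}^{q)}}$ once more to obtain $\varepsilon^{2}\|h\|_{W(L^{p-\varepsilon}(a^{\varepsilon/p}),L^{q-\varepsilon}(a^{\varepsilon/q}))}\le\|h\|_{W(L_{a}^{p)},L_{a}^{q)})}$.

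For the reverse inclusion, given $f$ satisfying the limit condition I would approximate it by the truncations $f_{N}=f\chi_{\{|f|\le N\}\cap B_{N}}$. Each $f_{N}$ is bounded with compact support, hence in $W(L^{p}(\mathbb{R}^{n}),L^{q}(\mathbb{R}^{n}))$, and via the embedding $W(L^{p},L^{q})\hookrightarrow W(L_{a}^{p)},L_{a}^{q)})$ obtained earlier (it uses $a\in L^{1}$) together with the density of $C_{0}^{\infty}$ in $W(L^{p},L^{q})$ (mollification and continuity of translation there), $f_{N}$ lies in $\overline{C_{0}^{\infty}}$; since that closure is a closed subspace, it contains $f$ once $\|f-f_{N}\|_{W(L_{a}^{p)},L_{a}^{q)})}\to0$. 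To prove this I would unfold the two nested suprema in the norm of $h_{N}=f-f_{N}$ and split each, at thresholds $\delta_{0},\eta_{0}>0$, into the supremum over $(0,\delta_{0}]$ and over $[\delta_{0},p-1]$: on the ranges bounded away from $0$ the inner $L^{p-\delta}(a^{\delta/p})$-norm of $h_{N}\chi_{Q+x}$, and then its outer $L^{q-\eta}(a^{\eta/q})$-norm, go to $0$ with $N$ uniformly in these parameters and locally uniformly in $x$, by dominated convergence after dominating $|f|^{p-\delta}a^{\delta/p}$ by a fixed $L^{1}$ function (splitting according to whether $|f|\gtrless1$ and $a\gtrless1$, using $a\in L^{1}$ and $f\in(L_{a}^{p)})_{loc}$); the contribution of the ranges $\delta\le\delta_{0},\ \eta\le\eta_{0}$ is bounded, uniformly in $N$, by a tail of the quantity appearing in the limit hypothesis, which that hypothesis forces to $0$ as $\delta_{0},\eta_{0}\to0$.

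The step I expect to be the main obstacle is the interplay of the inner parameter $\delta$ and the outer parameter $\eta$. The two normalising factors inside $\|\cdot\|_{W(L_{a}^{p)},L_{a}^{q)})}$ produce, along the diagonal $\delta=\eta=\varepsilon$, only the bound $\varepsilon^{2}\|f\|_{W(L^{p-\varepsilon}(a^{\varepsilon/p}),L^{q-\varepsilon}(a^{\varepsilon/q}))}\le\|f\|_{W(L_{a}^{p)},L_{a}^{q)})}$, not one with a single power of $\varepsilon$, so for the first inclusion to go through by this route the limit in the statement should be read with the factor $\varepsilon^{2}$ --- equivalently with the product $\delta\eta$ of the two grand-Lebesgue normalising factors --- and this choice must then be used consistently throughout. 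The same diagonal-versus-separate phenomenon recurs in the second inclusion, where the hypothesis lives only along $\delta=\eta=\varepsilon$ while $\|f-f_{N}\|_{W(L_{a}^{p)},L_{a}^{q)})}$ probes the two one-parameter suprema separately and, on a set of infinite measure, there is no monotone embedding $L^{r}\hookrightarrow L^{r'}$ to collapse them; the remedy is to lean on the weights $a^{\delta/p},a^{\eta/q}$ and on $a\in L^{1}$ to produce the uniform $L^{1}$-dominating functions needed away from the origin.
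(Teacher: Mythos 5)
For the inclusion of the closure into the set defined by the limit condition, your route coincides with the paper's: verify the limit on $C_{0}^{\infty}(\mathbb R^{n})$ by H\"older's inequality and $a\in L^{1}(\mathbb R^{n})$, then pass to a general $f$ in the closure by the triangle inequality. The obstacle you single out is genuine, and it is present in the paper's own argument: the final display of the paper's proof bounds $\varepsilon\|f_{n_{0}}-f\|_{W(L^{p-\varepsilon}(\mathbb R^{n},a^{\varepsilon/p}),L^{q-\varepsilon}(\mathbb R^{n},a^{\varepsilon/q}))}$ by $\|f_{n_{0}}-f\|_{W(L_{a}^{p)},L_{a}^{q)})}<\delta/2$ via (3.10), whereas the definitions supply only one factor of $\varepsilon$ per layer, i.e. $\varepsilon^{2}\|h\|_{W(L^{p-\varepsilon}(a^{\varepsilon/p}),L^{q-\varepsilon}(a^{\varepsilon/q}))}\le\|h\|_{W(L_{a}^{p)},L_{a}^{q)})}$, so what is actually controlled is $\varepsilon^{-1}\|f_{n_{0}}-f\|_{W(L_{a}^{p)},L_{a}^{q)})}$, which is not uniformly small as $\varepsilon\to0$. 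Your derivation of the two-layer comparison (take the $L^{q-\varepsilon}(a^{\varepsilon/q})$-norm of the pointwise inequality $\varepsilon H_{h,\varepsilon}(x)\le F^{p)}_{h}(x)$ and apply the outer grand-norm inequality once more) is exactly what the definitions support, and your conclusion that the limit in the statement should carry the factor $\varepsilon^{2}$ (equivalently the product $\delta\eta$ of the two normalising factors) is the correct repair; with that reading your first inclusion is complete, and your uniform H\"older bound on the dense set (the exponent of $|Q|$ even collapses to $1/p$) is fine.

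For the reverse inclusion you go beyond the paper, whose proof treats only the direction ``$f$ in the closure $\Rightarrow$ limit condition'' and never shows that the limit condition places $f$ back in the closure. Your truncation scheme ($f_{N}=f\chi_{\{|f|\le N\}\cap B_{N}}$, then $W(L^{p},L^{q})\hookrightarrow W(L_{a}^{p)},L_{a}^{q)})$ together with density of $C_{0}^{\infty}$ in $W(L^{p},L^{q})$) is the natural one, but the decisive step $\|f-f_{N}\|_{W(L_{a}^{p)},L_{a}^{q)})}\to0$ is not closed by the proposed splitting. The amalgam norm is a supremum over all pairs $(\delta,\eta)$ of inner and outer parameters, the hypothesis controls only the diagonal $\delta=\eta=\varepsilon$, and in the mixed regime (say $\delta\le\delta_{0}$ small while $\eta$ is bounded away from $0$) neither the hypothesis nor dominated convergence applies directly: for fixed $x$ the quantity $\sup_{\delta\le\delta_{0}}\delta\|(f-f_{N})\chi_{Q+x}\|_{L^{p-\delta}(a^{\delta/p})}$ tends to $0$ with $N$ only if $f\chi_{Q+x}$ already lies in the corresponding closure inside $L_{a}^{p)}(\mathbb R^{n})$, which is part of what is being proved. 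This half therefore needs an additional ingredient --- a hypothesis formulated for all pairs $(\delta,\eta)$, or a reduction to the one-variable characterization applied uniformly in $x$ --- and as written it is a plan with an honestly identified gap rather than a proof; since the paper omits this direction entirely, you are not missing anything the paper itself supplies.
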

\begin{proof}
 Let  $f\in\overline{C_0^{\infty}}(\mathbb R^{n})|_{W(L_{a}^{p)}, L_{a}^{q)})}.$  Then there exists a sequence $\left( f_{n}\right) \subset
 {C_0^{\infty}}(\mathbb R^{n})$ such that 
\begin{equation*}
\|f_n-f\|_{ W(L_{a}^{p)}, L_{a}^{p)})}\rightarrow 0.
\end{equation*}
Thus for given $\delta >0,$ there exists $n_{0}\in\mathbb{N} $ such that 
\begin{equation}
\|f_{n_0}-f\|_{ W(L_{a}^{p)}, L_{a}^{p)})}<\frac{\delta 
}{2}, %\tag{32}
\end{equation}
for all $n>n_0.$
\ Let $r=\frac{p}{p-\varepsilon},$ and $ r'=\frac{p}{\varepsilon}.$ Then $\frac{1}{r}+\frac{1}{r'}=1.$ Since
\begin{align}
  \int\limits_{\mathbb R^{n} }(\left\vert f_{n_{0}}\left(t\right) \chi _{Q+x}(t)\right\vert ^{p-\varepsilon })^{\frac{p}{p-\varepsilon }}dt = \int\limits_{\mathbb R^{n} }\left\vert f_{n_{0}}\left(t\right) \chi _{Q+x}(t)\right\vert ^{p}dt<\infty,% \tag{35}
\end{align}
and 
\begin{align}
 \int\limits_{\mathbb R^{n} }\left\vert a^{\frac{\varepsilon}p}(t)\right\vert^{\frac{p}{\varepsilon }}dt=  \int\limits_{\mathbb R^{n} }\left\vert a\right\vert dt=\|a(t)\|_{L^1}<\infty,
\end{align}
then $\left\vert f_{n_{0}}\left(t\right) \chi _{Q+x}\right\vert ^{p-\varepsilon }\in L^{\frac{p}{p-\varepsilon}}(\mathbb R^{n})$
and $\left\vert a\right\vert^{\frac{\varepsilon}{p}} \in L^{\frac{p}{\varepsilon}}(\Omega).$ By the H\"{o}lder's inequality we write
\begin{align}
\int\limits_{\mathbb R^{n}}\left\vert f_{n_{0}}\left(t\right) \chi _{Q+x}(t)\right\vert ^{p-\varepsilon } a(t)^{\frac{\varepsilon}{p}}dt\leq&\| (f_{n_{0}}\left(t\right) \chi _{Q+x})^{p-\varepsilon}\|_{L^{\frac{p}{p-\varepsilon}}}.\|a^{\frac{\varepsilon}{p}}\|_{L^{\frac{p}{\varepsilon}}}.%\tag{36}%\\\leq\| f_{n_{0}}\left(t\right) \chi _{Q+x}\|_{L^{{p}}}\|a\|^{\frac{\varepsilon}{p}}_{L^1}
\end{align}
%Since $a\in L^{1},$ by (22) 
By (3.11) and (3.13)
\begin{align}
\varepsilon \left\Vert(f_{n_{0}} \chi _{Q+x})^{p-\varepsilon}\right\Vert _{(L^{p-\varepsilon }(\mathbb R^{n}), a^{\frac{\varepsilon}p})}&=\varepsilon \left\{ \int\limits_{\mathbb R^{n} }\left\vert f_{n_{0}}\left(t\right) \chi _{Q+x}(t)\right\vert ^{p-\varepsilon } \notag a^{\frac{\varepsilon}p}(t)dt\right\} ^{\frac{1}{
p-\varepsilon }}\\&\leq\varepsilon \left\{ \| (f_{n_{0}}\left(t\right) \chi _{Q+x})^{p-\varepsilon}\|_{L^{\frac{p}{p-\varepsilon}}}.\|\notag a^{\frac{\varepsilon}{p}}\|_{L^{\frac{p}{\varepsilon}}}\l\right\} ^{\frac{1}{
p-\varepsilon }}\\&=\varepsilon \left\{ \| (f_{n_{0}}\left(t\right) \chi _{Q+x})^{p-\varepsilon}\|_{L^{\frac{p}{p-\varepsilon}}}\right\} ^{\frac{1}{p-\varepsilon }}. \left\{\|a^{\frac{\varepsilon}{p}}\|_{L^{\frac{p}{\varepsilon}}}\l\right\} ^{\frac{1}{p-\varepsilon }}.%\tag{37}
\end{align}
From (3.11) and (3.12) we observe that 
\begin{align*}
\|a^{\frac{\varepsilon}{p}}\|_{L^{\frac{p}{\varepsilon}}}=\|a\|^{\frac{\varepsilon}{p}}_{L^{1}}.
\end{align*}
and
\begin{align}
 \left\{ \| (f_{n_{0}}\left(t\right) \chi _{Q+x})^{p-\varepsilon}\|_{L^{\frac{p}{p-\varepsilon}}}\right\} ^{\frac{1}{
p-\varepsilon }}= \| f_{n_{0}} \chi _{Q+x})\|_{L^{p}}.%\tag{38}
\end{align}
Since $a\in L^1 (\mathbb R^{n}),$ the right hand side of  (3.14) is finite. Then  from (3.13) and (3.14) we write
\begin{align}
\notag\varepsilon\|f_{n_0}\|_{W(L^{p-\varepsilon }(\mathbb R^{n}), a^{\frac{\varepsilon}p}),L^{q-\varepsilon }(\mathbb R^{n}),  a^{\frac{\varepsilon}q})}&=\varepsilon\left\Vert \left\Vert f_{n_{0}}\chi_{Q+x}\right\Vert _{(L^{p-\varepsilon }(\mathbb R^{n}), \notag a^{\frac{\varepsilon}p})}\right\Vert _{(L^{q-\varepsilon }(\mathbb R^{n}), a^{\frac{\varepsilon}q}) }\\&\leq\varepsilon\left\Vert \| f_{n_{0}} \chi _{Q+x}\|_{L^{{p}}}\|a\|^{\frac{\varepsilon}{p(p-\varepsilon)}}_{L^1}\right\Vert \notag_{(L^{q-\varepsilon }(\mathbb R^{n}), a^{\frac{\varepsilon}q}) }\\&=\varepsilon\|a\|^{\frac{\varepsilon}{p(p-\varepsilon)}}\notag_{L^1}\left\Vert \| f_{n_{0}} \chi _{Q+x}\|_{L^{{p}}}\right\Vert _{(L^{q-\varepsilon }(\mathbb R^{n}), a^{\frac{\varepsilon}\notag q}) }\\&=\varepsilon\|a\|^{\frac{\varepsilon}{p(p-\varepsilon)}}_{L^1}\left\Vert F^p_{ f_{n_{0}}}\right\Vert _{L^{q-\varepsilon}\notag(\mathbb R^{n}, a^{\frac{\varepsilon}q}) }\\&\leq\varepsilon\|a\|^{\frac{\varepsilon}{p(p-\varepsilon)}}_{L^1}\|a\|\notag^{\frac{\varepsilon}{q(q-\varepsilon)}}_{L^1}\|F_{ f_{n_{0}}}\|_{L^{q}(\mathbb R^{n}))}\\&=\varepsilon\|a\|^{\frac{\varepsilon}{p(p-\varepsilon)}}_{L^1}\|a\|^{\frac{\varepsilon}{q(q-\varepsilon)}}_{L^1}\|f_{n_0}\|_{W(L^{p }(\mathbb R^{n})),L^{q}(\mathbb R^{n}))}.%\tag{39}
\end{align}
If $\varepsilon\to 0,$ the right hand side of (3.15) tends to zero. Thus
\begin{align*}
lim_{\varepsilon\to 0}\varepsilon\|f_{n_0}\|_{W(L^{p-\varepsilon }(\mathbb R^{n}, a^{\frac{\varepsilon}p}),L^{q-\varepsilon }(\mathbb R^{n}, a^{\frac{\varepsilon}q})}=0. 
\end{align*}
 Hence there exists $\varepsilon_0>0$ such that when $\varepsilon<\varepsilon_0,$
 \begin{align}
 \varepsilon\|f_{n_0}\|_{W(L^{p-\varepsilon }(\mathbb R^{n}, a^{\frac{\varepsilon}p}),L^{q-\varepsilon }(\mathbb R^{n}, a^{\frac{\varepsilon}q})}<\frac{\delta}2.%\tag {40}
 \end{align}
 Then by (3.10) and (3.17) we obtain 
  \begin{align*}
   \varepsilon\|f\|_{W(L^{p-\varepsilon }(\mathbb R^{n}, a^{\frac{\varepsilon}p}),L^{q-\varepsilon }(\mathbb R^{n}, a^{\frac{\varepsilon}q})}&\leq\varepsilon\|f_{n_0}-f\|_{W(L^{p-\varepsilon }(\mathbb R^{n}, a^{\frac{\varepsilon}p}),L^{q-\varepsilon }(\mathbb R^{n}, a^{\frac{\varepsilon}q})}\\+\varepsilon\|f_{n_0}\|_{W(L^{p-\varepsilon }(\mathbb R^{n}, a^{\frac{\varepsilon}p}),L^{q-\varepsilon }(\mathbb R^{n}, a^{\frac{\varepsilon}q})}&\leq\frac{\delta}2+\frac{\delta}2=\delta 
   \end{align*}
   when  $\varepsilon<\varepsilon_0.$ This completes the proof.
\end{proof}
\par
It is known that if the measure of  $\Omega\subset\mathbb R^{n})$ is finite and $a(x)=1, b(x)=1$  the generalized grand Lebesgue space $L_{a}^{p)}(\mathbb R^{n})$ reduce to the classical grand Lebesgue space $L^{p)}(\Omega)$ and $ L_{b}^{q)}(\mathbb R^{n})$ reduce to $ L^{q)}(\Omega)$. Thus then the generalized grand Wiener amalgam space $W(L_{a}^{p)}(\mathbb R^{n}), L_{b}^{q)}(\mathbb R^{n}))$ reduce to grand Wiener amalgam space $W(L^{p)}(\Omega),L^{q)}(\Omega))$, (see \cite{g5}). It is also known by Proposition 4.4 that ${C_0^{\infty}}(\Omega)$ is not dense in $W(L^{p)}(\Omega),L^{q)}(\Omega))$. So we can think that ${C_0^{\infty}}(\Omega)$ is not dense in the generalized grand Wiener amalgam space $W(L_{a}^{p)}(\mathbb R^{n}), L_{b}^{q)}(\mathbb R^{n}))$.

\section{The Hardy-Littlewood Maximal Operator on Generalized Grand Wiener Amalgam spaces}

For a locally integrable function $f$ on $\mathbb R^n$, we define the (centered) Hardy- Littlewood maximal function $Mf$ of $f$ by 
\begin{align*}
Mf(x)=sup_{r>0}\frac{1}{\mid B_r(x)\mid}\int_{B_r(x)}\mid f(y)\mid dy,
\end{align*}
where $ B_r(x)$ is the open ball centered at $x.$ The supremum is taken over all balls  $B_r(x).$
\begin{proposition}
 Let $1\leq p\leq q\leq r<\infty$. If   $a\in L^{1}(\mathbb R^{n})$ and $b\in L^{1}(\mathbb R^{n}),$  then the  Hardy-Littlewood Maximal Operator M 
\begin{align*}
M :W(L^{r}, L^{q})(\mathbb R^{n})\rightarrow W(L_{a}^{p)}, L_{b}^{q)})(\mathbb R^{n})
\end{align*}
is bounded
\end{proposition}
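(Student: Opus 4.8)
The strategy is to factor the claimed mapping through two classical facts: the boundedness of the Hardy–Littlewood maximal operator on ordinary Wiener amalgam spaces, and the embedding of ordinary Wiener amalgam spaces into the generalized grand Wiener amalgam spaces supplied by Theorem~4.2(a). So the plan is: first recall that since $1\le p\le q\le r<\infty$, the maximal operator $M$ maps $W(L^{r},L^{q})(\mathbb R^{n})$ boundedly into $W(L^{p},L^{q})(\mathbb R^{n})$; then invoke Theorem~4.2(a), which states precisely that $W(L^{p}(\mathbb R^{n}),L^{q}(\mathbb R^{n}))\hookrightarrow W(L_{a}^{p)}(\mathbb R^{n}),L_{b}^{q)}(\mathbb R^{n}))$ with $\|f\|_{W(L_{a}^{p)},L_{b}^{q)})}\le C\|f\|_{W(L^{p},L^{q})}$ whenever $a,b\in L^{1}(\mathbb R^{n})$; and finally compose the two inequalities.

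\textbf{Step 1 (maximal operator on classical amalgams).} I would first establish, or cite, that for $1<p<\infty$ the maximal operator is bounded on $L^{p}(\mathbb R^{n})$ (the Hardy–Littlewood theorem), hence, by the standard localization argument for Wiener amalgams (the global component is unchanged and the local estimate $\|(Mf)\chi_{Q+x}\|_{L^{p}}\lesssim \sum_{k}\|f\chi_{Q+x+k}\|_{L^{p}}$ with rapidly--or at least summably--decaying weights over a lattice covering), one gets $M:W(L^{p},L^{q})\to W(L^{p},L^{q})$ bounded. To pass from $W(L^{r},L^{q})$ to $W(L^{p},L^{q})$ on the \emph{local} side one uses $p\le r$ together with the fact that on each fixed bounded cube $Q+x$ one has $L^{r}(Q+x)\hookrightarrow L^{p}(Q+x)$ with a constant depending only on $p,r,|Q|$; this gives $W(L^{r},L^{q})\hookrightarrow W(L^{p},L^{q})$. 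Combining, $M:W(L^{r},L^{q})\to W(L^{p},L^{q})$ is bounded. (One could alternatively keep $r$ on the local exponent throughout and only drop to $p$ at the very end via the same cube embedding; the order does not matter.)

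\textbf{Step 2 (embedding into the grand amalgam).} Apply Theorem~4.2(a) with the given $a,b\in L^{1}(\mathbb R^{n})$: there is $C'>0$ with
\begin{align*}
\|g\|_{W(L_{a}^{p)}(\mathbb R^{n}),L_{b}^{q)}(\mathbb R^{n}))}\le C'\,\|g\|_{W(L^{p}(\mathbb R^{n}),L^{q}(\mathbb R^{n}))}
\end{align*}
for every $g\in W(L^{p},L^{q})(\mathbb R^{n})$.

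\textbf{Step 3 (composition).} For $f\in W(L^{r},L^{q})(\mathbb R^{n})$, put $g=Mf$; Step~1 gives $g\in W(L^{p},L^{q})$ with $\|g\|_{W(L^{p},L^{q})}\le C''\|f\|_{W(L^{r},L^{q})}$, and then Step~2 gives $Mf\in W(L_{a}^{p)},L_{b}^{q)})$ with
\begin{align*}
\|Mf\|_{W(L_{a}^{p)},L_{b}^{q)})}\le C'\,\|Mf\|_{W(L^{p},L^{q})}\le C'C''\,\|f\|_{W(L^{r},L^{q})},
\end{align*}
which is exactly the asserted boundedness, with $C=C'C''$.

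\textbf{Main obstacle.} The genuinely nontrivial ingredient is Step~1, the sublinear, localization estimate for $M$ on Wiener amalgam spaces; the endpoint $p=1$ is delicate (the maximal operator is only weak-$(1,1)$ on $L^{1}$), so if the statement is really meant to include $p=1$ one must either invoke a weak-type version on the local component or restrict to $p>1$. Everything after that is a mechanical chaining of two norm inequalities already available in the paper, so the proof is short once Step~1 is in place; I would flag the $p=1$ case explicitly and, if needed, note that the argument as written is cleanest for $1<p\le q\le r<\infty$.
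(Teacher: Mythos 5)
Your overall architecture is sound, but your route differs from the paper's at the one point that matters. The paper never invokes boundedness of $M$ on a genuine amalgam space: it collapses everything to a Lebesgue space via $W(L^{r},L^{p})\hookrightarrow W(L^{p},L^{p})=L^{p}$, applies the classical $L^{p}$-boundedness of $M$, and then re-embeds $L^{p}=W(L^{p},L^{p})\hookrightarrow W(L^{p},L^{q})\hookrightarrow W(L_{a}^{p)},L_{b}^{q)})$ using $p\le q$ and $a,b\in L^{1}$. That argument is more elementary than yours, but note that it only establishes boundedness from $W(L^{r},L^{p})$, which for $p<q$ is a proper subspace of the stated domain $W(L^{r},L^{q})$; your version, which keeps $L^{q}$ as the global component throughout, actually matches the statement as written. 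You could retain that advantage while staying as elementary as the paper by collapsing to $L^{q}$ instead of to an amalgam: $W(L^{r},L^{q})\hookrightarrow W(L^{q},L^{q})=L^{q}\xrightarrow{M}L^{q}=W(L^{q},L^{q})\hookrightarrow W(L^{p},L^{q})\hookrightarrow W(L_{a}^{p)},L_{b}^{q)})$, using $q\le r$ on the local component going in and $p\le q$ on the local component coming out; this bypasses your Step 1 entirely.

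Two caveats on your Step 1 as written. The localization estimate you sketch does not hold with summable weights: if $f_{k}=f\chi_{Q_{k}}$, then $Mf_{k}(x)$ decays only like $\operatorname{dist}(x,Q_{k})^{-n}$, so the induced lattice weights behave like $|k|^{-n}$, which is \emph{not} summable over $\mathbb{Z}^{n}$. The correct proof that $M$ is bounded on $W(L^{p},L^{q})$ splits $M$ into a local part (controlled by the $L^{p}$ theorem, needing $p>1$) and a global part controlled by the discrete maximal operator acting on the sequence of local $L^{1}$-norms, which needs $q>1$; you should either argue this way or cite the result rather than derive it from a summable-weights convolution. Finally, you are right to flag the endpoint: both your proof and the paper's use the strong $(p,p)$ bound, so the hypothesis should really exclude $p=1$ (and, for the global estimate in your route, $q=1$), even though the statement allows $1\le p$.
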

\begin{proof}
It is known  that the  Hardy-Littlewood Maximal Operator M
\begin{align*}
M:L^p(\mathbb R^{n})\rightarrow L^p(\mathbb R^{n})
\end{align*}
is bounded for $1<p<\infty$ ( see Theorem 1, in \cite {st} ).Thus there exists a constant $C_1>0$ such that 
\begin{align}
\|Mh\|_{L^p}\leq C_1\|h\|_{L^p} %\tag {44}
\end{align}
for all $h\in  L^p(\mathbb R^{n}).$ Since $a\in L^{1}(\mathbb R^{n})$ and $b\in L^{1}(\mathbb R^{n}),$  then  we write
\begin{align}
L^{p}(\mathbb R^{n})\hookrightarrow L_{a}^{p)}(\mathbb R^{n}) , L^{q}(\mathbb R^{n})\hookrightarrow L_{a}^{q)}(\Omega). %\tag {45}
\end{align}
Since $p\leq q,$ by the properties of Wiener amalgam space and from $(4.2)$ we observe that
\begin{align*}
L^{p}(\mathbb R^{n})=W(L^{p}(\mathbb R^{n}), L^{p}(\mathbb R^{n}))\hookrightarrow W(L^{p}(\mathbb R^{n}), L^q(\mathbb R^{n}))\hookrightarrow W(L_{a}^{p)}(\mathbb R^{n}), L_{b}^{q)}(\mathbb R^{n})).
\end{align*}
 Hence the unit map
\begin{align*}
I :L^{p}(\mathbb R^{n})\rightarrow W(L_{a}^{p)}(\mathbb R^{n}), L_{b}^{q)}(\mathbb R^{n}))
\end{align*}
is bounded. So, there exists $C_2>0$ such that for all $g\in L^{p}(\mathbb R^{n}),$
\begin{align}
\|I(g)\|_{W(L_{a}^{p)}, L_{b}^{q)})}=\|g\|_{W(L_{a}^{p)}, L_{b}^{q)})}\leq C_2\|g\|_{L^p}. %\tag{46}
\end{align}
Let  $f\in L^{p}(\mathbb R^{n}).$ Since
 \begin{align*}
L^{p}(\mathbb R^{n})\hookrightarrow W(L_{a}^{p)}(\mathbb R^{n}), L_{b}^{q)}(\mathbb R^{n})),
\end{align*}
by (4.1), (4.3) and the hypothesis $p\leq q,$ we have
\begin{align}
\|Mf\|_{W(L_{a}^{p)}, L_{b}^{q)})}\leq C_2\|Mf\|_{L^p}\leq C_1 C_2\|f\|_{L^p}. %\tag{47}
\end{align}
Thus the  Hardy-Littlewood Maximal Operator M, 
\begin{align*}
M:L^{p}(\mathbb R^{n})\rightarrow W(L_{a}^{p)}(\mathbb R^{n}), L_{b}^{q)}(\mathbb R^{n}))
\end{align*}
is bounded. Since  $p\leq r,$  by the properties of Wiener amalgam space we have the embedding 
\begin{align}
W(L^{r}(\mathbb R^{n}), L^{p}(\mathbb R^{n}))\hookrightarrow W(L^{p}(\mathbb R^{n}), L^{p}(\mathbb R^{n}))=L^{p}(\mathbb R^{n}).
\end{align}
Then by (4.4) and (4.5),
\begin{align}
\|Mf\|_{W(L_{a}^{p)}, L_{b}^{q)})}\leq C_1 C_2\|f\|_{L^p}\leq C_1 C_2 C_3 \|f\|_{W(L^{r}, L^{p})}
\end{align}
for some $C _3>0$.
 Finally from (5.6)  the  Hardy-Littlewood Maximal Operator M, 
\begin{align*}
M :W(L_{a}^{r)}(\mathbb R^{n}), L_{b}^{p)}(\mathbb R^{n}))\rightarrow W(L_{a}^{p)}(\mathbb R^{n}), L_{b}^{q)}(\mathbb R^{n}))
\end{align*}
is bounded.

In \cite{u}, the weighted generalized grand Lebesgue space $L_{a}^{p)}(\Omega,a \omega)$  is defined by taking a second weight function $w(x)$ next to $a(x)$ by the norm
\begin{align*}
\notag\left\Vert {f}\right\Vert _{L^{p)}_a(\Omega,\omega)}& = \sup_{0<\varepsilon \leq p-1}\left(
\varepsilon \int_{\Omega }\left\vert f\right\vert ^{p-\varepsilon} a(x)^{\varepsilon}\omega(x) dx
\right) ^{\frac{1}{p-\varepsilon}%\tag{6}
}\\&=\sup_{0<\varepsilon \leq p-1}\varepsilon ^{\frac{1 }{p-\varepsilon }}\left\Vert {f}\right\Vert
_{L^{p-\varepsilon }(\Omega, a^{\varepsilon}(x)\omega(x))}.% \tag{7} 
\end{align*}
\end{proof}
It is easy to see that $L_{a}^{p)}(\Omega,a \omega)$  is a kind of generalization of the spaces generalized grand Lebesgue space $L_{a}^{p)}(\Omega)$ and  weighted Lebesgue space $ L^{p}(\Omega,\omega).$ For that reason  the  grand  Wiener amalgam space $ W(L_{a}^{p)}(\mathbb R^{n}), L^{q}(\mathbb R^{n},\omega))$ is defined as in Definition2.1.

\begin{proposition}
 Let $a(x)$ and $w(x)$ be weight functions. Assume that $a(x)$  is  Beurling's weight,  $\frac{1}{\omega^{\frac{1}{q}}}\in L^{r}(\mathbb R^{n})$ and  $\frac{1}{q}+\frac{1}{r}=1. $ Then the Hardy-Littlewood maximal operator M,
\begin{align*}
M :W(L_{a}^{p)}(\mathbb R^{n}),L^{q}(\mathbb R^{n},\omega))\rightarrow W(L_{a}^{p)}(\mathbb R^{n}),L^{q}(\mathbb R^{n},\omega))
\end{align*}
is not bounded.
\end{proposition}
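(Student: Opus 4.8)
The plan is to disprove boundedness by producing a single function that belongs to $W(L_{a}^{p)}(\mathbb R^{n}),L^{q}(\mathbb R^{n},\omega))$ but whose Hardy--Littlewood maximal function does not. I would take as test function $f=\chi_{B(0,1)}$ (any nonnegative, bounded, compactly supported $f\not\equiv 0$ works, and since $Mf=M|f|$ one may always assume $f\ge 0$). First I would verify $f$ is in the amalgam space: its control function $F^{p)}_{f,a}(x)=\|f\chi_{Q+x}\|_{L_{a}^{p)}(\mathbb R^{n})}$ is supported in the bounded set $\{x:(Q+x)\cap B(0,1)\neq\emptyset\}$ and, because $a\in L^{1}_{loc}$ and $\tfrac{1}{p-\varepsilon}\le 1$, is bounded there uniformly in $\varepsilon$; since $\omega\in L^{1}_{loc}$ this puts $F^{p)}_{f,a}$ in $L^{q}(\mathbb R^{n},\omega)$, hence $f\in W(L_{a}^{p)}(\mathbb R^{n}),L^{q}(\mathbb R^{n},\omega))$. (In fact every compactly supported $f\in L^{\infty}(\mathbb R^{n})$ lies in this amalgam space for the same reason, and $Mf$ is bounded, so also $Mf\in (L_{a}^{p)})_{loc}$.)

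Next I would bring in the classical far--field lower bound for the maximal operator: if $\operatorname{supp}f\subset B(0,\rho)$ and $\|f\|_{L^{1}}>0$, then $Mf(x)\ge c_{n}\|f\|_{L^{1}}\,|x|^{-n}$ for $|x|\ge\rho$ (in particular $Mf\notin L^{1}(\mathbb R^{n})$). Since $Q$ is bounded, for $|x|$ large the whole set $Q+x$ lies in $\{|t|\le 2|x|\}$, so $Mf\cdot\chi_{Q+x}\ge c\,|x|^{-n}\chi_{Q+x}$ pointwise. Pulling this constant out of each norm $\|\cdot\|_{L^{p-\varepsilon}(\mathbb R^{n},a^{\varepsilon/p})}$ (i.e.\ using the monotonicity of $\|\cdot\|_{L_{a}^{p)}}$ established in Section~2) together with the fact that the Beurling weight $a$ satisfies $a\ge 1$, so that for any fixed $\varepsilon_{*}\in(0,p-1]$
\begin{equation*}
\|\chi_{Q+x}\|_{L_{a}^{p)}(\mathbb R^{n})}\ \ge\ \varepsilon_{*}\Big(\int_{Q+x}a(t)^{\varepsilon_{*}/p}\,dt\Big)^{1/(p-\varepsilon_{*})}\ \ge\ \varepsilon_{*}\,|Q|^{1/(p-\varepsilon_{*})}=:c_{1}>0
\end{equation*}
uniformly in $x$, I would obtain
\begin{equation*}
F^{p)}_{Mf,a}(x)=\|Mf\,\chi_{Q+x}\|_{L_{a}^{p)}(\mathbb R^{n})}\ \ge\ c\,c_{1}\,|x|^{-n}\qquad(|x|\ge R_{0}).
\end{equation*}

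The crux — the step I expect to be the main obstacle — is to deduce from the hypothesis $\omega^{-1/q}\in L^{r}(\mathbb R^{n})$, $\tfrac1q+\tfrac1r=1$, that $\int_{\mathbb R^{n}}|x|^{-nq}\omega(x)\,dx=\infty$. Granting this, the above lower bound for $F^{p)}_{Mf,a}$ gives
\begin{equation*}
\|Mf\|_{W(L_{a}^{p)},L^{q}(\omega))}^{q}=\int_{\mathbb R^{n}}F^{p)}_{Mf,a}(x)^{q}\,\omega(x)\,dx\ \ge\ (c\,c_{1})^{q}\int_{|x|\ge R_{0}}|x|^{-nq}\omega(x)\,dx=\infty ,
\end{equation*}
so $Mf\notin W(L_{a}^{p)}(\mathbb R^{n}),L^{q}(\mathbb R^{n},\omega))$ while $f\in W(L_{a}^{p)}(\mathbb R^{n}),L^{q}(\mathbb R^{n},\omega))$, and therefore $M$ is not bounded.

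To prove the divergence I would split $\mathbb R^{n}$ into dyadic annuli $A_{k}=\{2^{k}\le|x|<2^{k+1}\}$ and apply H\"older's inequality with exponents $q$ and $r$ to the factorization $1=\omega^{1/q}\cdot\omega^{-1/q}$ on each $A_{k}$:
\begin{equation*}
|A_{k}|\ \le\ \Big(\int_{A_{k}}\omega\Big)^{1/q}\Big(\int_{\mathbb R^{n}}\omega^{-r/q}\Big)^{1/r}=K\Big(\int_{A_{k}}\omega\Big)^{1/q},\qquad K:=\|\omega^{-1/q}\|_{L^{r}(\mathbb R^{n})}<\infty .
\end{equation*}
Since $|A_{k}|$ is comparable to $2^{kn}$, this yields $\int_{A_{k}}\omega\gtrsim K^{-q}2^{knq}$, hence $\int_{A_{k}}|x|^{-nq}\omega\ge(2^{k+1})^{-nq}\int_{A_{k}}\omega\gtrsim 2^{-nq}K^{-q}$, a positive constant independent of $k$; summing over $k\ge k_{0}$ gives $\int_{|x|\ge 2^{k_{0}}}|x|^{-nq}\omega=\infty$, as needed. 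Everything apart from this estimate is routine bookkeeping; the conceptual point is that the integrability of $\omega^{-1/q}$ forbids $\omega$ from decaying, whereas $Mf$ of a compactly supported function decays only like $|x|^{-n}$, so its amalgam norm weighted by $\omega$ is forced to be infinite.
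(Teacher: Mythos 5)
Your proof is correct, but it is organized quite differently from the paper's. The paper argues structurally: using H\"older with $\omega^{-1/q}\in L^{r}$ it first shows $L^{q}(\mathbb R^{n},\omega)\subset L^{1}(\mathbb R^{n})$, and using $a\ge 1$ it shows $L_{a}^{p)}\subset L^{p-\varepsilon_{0}}(\mathbb R^{n})$; chaining these through the amalgam construction gives $W(L_{a}^{p)},L^{q}(\omega))\subset W(L^{p-\varepsilon_{0}},L^{1})\subset W(L^{1},L^{1})=L^{1}(\mathbb R^{n})$, and then it invokes the classical fact that $M\chi_{E}\notin L^{1}$ for a compact set $E$ to conclude $M\chi_{E}$ cannot lie in the target space. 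You instead work directly with the control function: you establish the pointwise lower bound $F^{p)}_{Mf,a}(x)\gtrsim |x|^{-n}$ (using the far-field decay of $Mf$ and $a\ge1$ to bound $\|\chi_{Q+x}\|_{L_{a}^{p)}}$ from below uniformly in $x$), and then prove the quantitative divergence $\int|x|^{-nq}\omega\,dx=\infty$ by H\"older on dyadic annuli. The two routes rest on the same three ingredients (H\"older against $\omega^{-1/q}\in L^{r}$, $a\ge1$, and the slow decay of $Mf$), and in fact your ``crux'' lemma follows in one line from the paper's embedding $L^{q}(\omega)\hookrightarrow L^{1}$ applied to $x\mapsto|x|^{-n}\chi_{\{|x|\ge1\}}$, so the dyadic computation, while correct, is doing more work than necessary. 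What your version buys is completeness at a point the paper glosses over: you explicitly verify that the test function $\chi_{B(0,1)}$ belongs to the domain $W(L_{a}^{p)},L^{q}(\omega))$ (the paper only checks $\chi_{E}\in L^{1}$, which, given its own inclusion $W(L_{a}^{p)},L^{q}(\omega))\subset L^{1}$, does not by itself place $\chi_{E}$ in the domain), and your quantitative lower bound makes the failure of boundedness fully self-contained rather than resting on a cited black box.
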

\begin{proof}
Let $ g\in L^{q}(\mathbb R^{n},\omega).$ Then $g.\omega^{\frac{1}{q}}\in L^q(\mathbb R^{n}).$ Since $\frac{1}{\omega^{\frac{1}{q}}}\in L^r(\mathbb R^{n})$ and   $\frac{1}{q}+\frac{1}{r}=1, $ by the Holder's inequality $g\in L^1(\mathbb R^{n}).$ Thus $ L^{q}(\mathbb R^{n},\omega)\subset L^1(\mathbb R^{n}).$ Since   $a(x)>1,$ then 
\begin{align}
\notag\left\Vert {f}\right\Vert _{L^{p)}_a(\mathbb R^{n})}& = \sup_{0<\varepsilon \leq p-1}\left(
\varepsilon \int_{\mathbb R^{n} }\left\vert f\right\vert ^{p-\varepsilon} a(x)^{\varepsilon} dx
\right) ^{\frac{1}{p-\varepsilon}
}\\&=\sup_{0<\varepsilon \leq p-1}\varepsilon ^{\frac{1 }{p-\varepsilon }}\left\Vert {f}\right\Vert
\notag_{L^{p-\varepsilon }(\mathbb R^{n}, a^{\varepsilon})}\\&\ge\sup_{0<\varepsilon \leq p-1}\varepsilon ^{\frac{1 }{p-\varepsilon }}\left\Vert {f}\right\Vert_{L^{p-\varepsilon }(\mathbb R^{n})}.
\end{align}
Take a fixed $0<\varepsilon_0\leq p-1.$ Hence from (4.7) we write 
\begin{align}
\left\Vert {f}\right\Vert _{L^{p)}_a(\mathbb R^{n})}\ge\varepsilon_0 ^{\frac{1 }{p-\varepsilon_0 }}\left\Vert {f}\right\Vert
\notag_{L^{p-\varepsilon_0 }(\mathbb R^{n}, a^{\varepsilon_0})}\ge\varepsilon_0 ^{\frac{1 }{p-\varepsilon_0 }}\left\Vert {f}\right\Vert_{L^{p-\varepsilon_0 }(\mathbb R^{n})} 
\end{align}
for a fixed $0<\varepsilon_0<p-1.$ Thus we have the inclusions $L^{p)}_a(\mathbb R^{n})\subset L^{p-\varepsilon_0}(\mathbb R^{n}, a^{\frac{\varepsilon_0}p})\subset L^{p-\varepsilon_0}(\mathbb R^{n}).$ This implies the following nesting property:
\begin{align}
\notag W(L_{a}^{p)}(\mathbb R^{n}),L^{q}(\mathbb R^{n},\omega))\subset W(L^{p-\varepsilon_0}(\mathbb R^{n}, a^{\frac{\varepsilon_0}p}), L^{q}(\mathbb R^{n},\omega))\subset  W(L^{p-\varepsilon_0}(\mathbb R^{n}), L^{1}(\mathbb R^{n}))&\\\subset W( L^{1}(\mathbb R^{n}),  L^{1}(\mathbb R^{n}) =L^{1}(\mathbb R^{n})). % \tag{48}
\end{align}
It is known by Theorem 1, in \cite {st}  that the Hardy-Littlewood maximal operator $ M,$ 
\begin{align}
M:  L^{1}(\mathbb R^{n})\rightarrow  L^{1}(\mathbb R^{n})
\end{align}
is not bounded. Let $E$ be a compact subset of $\mathbb R^{n}.$ Take the characteristic function $\chi_ {E}$. It is easy to show that  $\chi_ {E}\in L^{1}(\mathbb R^{n}).$  Then we obtain  $M(\chi_ {E})\notin  L^{1}(\mathbb R^{n}).$  This implies from (4.9) that $M(\chi_ {E})\notin W(L_{a}^{p)}(\mathbb R^{n}), L_{\omega}^{q}(\mathbb R^{n})).$  This completes the proof.
\end{proof}

\end{document}